\pgfplotsset{/pgf/number format/use comma,compat=newest}
\numberwithin{equation}{section}
\numberwithin{equation}{section}
\theoremstyle{definition}
\newtheorem{definition}[equation]{Definition}
\theoremstyle{definition}
\newtheorem{remark}[equation]{Remark}
\theoremstyle{definition}
\theoremstyle{definition}
\theoremstyle{lemma}
\theoremstyle{lemma}
\newtheorem{lemma}[equation]{Lemma}
\theoremstyle{theorem}
\newtheorem{theorem}[equation]{Theorem}
\theoremstyle{proposition}
\newtheorem{proposition}[equation]{Proposition}
\theoremstyle{corollary}
\newtheorem{corollary}[equation]{Corollary}
\theoremstyle{corollary}
\theoremstyle{definition}
\newtheorem{example}[equation]{Example}
\theoremstyle{example}
\theoremstyle{proposition}
\theoremstyle{definition}
\DeclareMathOperator*{\supp}{supp}
\newcommand{\one}{\mathbf{1}}
\newcommand{\R}{\mathbf{R}}
\newcommand{\PP}{\mathbf{P}}
\newcommand{\E}{\mathbf{E}}
\renewcommand{\P}{\mathbf{P}}
\newcommand{\N}{\mathbf{N}}
\renewcommand{\b}{b}
\title[Langevin dynamics with singular potentials]{Ergodicity and Lyapunov functions for Langevin dynamics with singular potentials}
\author[D.P. Herzog]{David P. Herzog}
\address{Department of Mathematics, Iowa State University, Ames, IA 50011}
\author[J. C. Mattingly]{Jonathan C. Mattingly}
\address{Department of Mathematics, Center for Theoretical
    and Mathematical Sciences,
Center for Nonlinear and Complex Systems, and Department of Statistical
    Sciences\\ Duke
    University, Durham, NC, 27708-0251}
\begin{document}
\maketitle

\begin{abstract}
We study Langevin dynamics of $N$ particles on $\R^d$ interacting through a singular repulsive potential, e.g.~the well-known Lennard-Jones type, and show that the system converges to the unique invariant Gibbs measure exponentially fast in a weighted total variation distance.  The proof of the main result relies on an explicit construction of a Lyapunov function.  In contrast to previous results for such systems~\cite{CG_10,Grothaus_Stilgenbauer_2015}, our result implies geometric convergence to equilibrium starting from an essentially optimal family of initial distributions.                    
\end{abstract}

\section{Introduction}

We are interested in second-order Langevin dynamics when the potential
is singular. Important examples of dynamics in this class include
classical models of molecules interacting through a Lennard-Jones
potential or massive particles interacting through a Coulomb
potential. Despite its apparent applicability to physical systems, the
mathematical theory underlying these dynamics, in particular how they
relax to equilibrium, is less understood.  The primary aim of this
paper is to prove that such systems converge exponentially fast to the
unique invariant Gibbs measure when started from a wide array of initial distributions.

Throughout, we consider $N\geq 1$ particles evolving on Euclidean space $\R^d$ according to the Langevin stochastic differential equation
\begin{equation}  \label{eqn:sdem}
\left\{
  \begin{aligned}
    dq(t)&= p(t) \, dt \\
  dp( t)&= [-\gamma p(t) - \nabla U(q(t))] \, dt + \sqrt{2\gamma T} \, dB(t) 
  \end{aligned}
\right.
\end{equation}
where $q(t)=(q_1(t), \ldots, q_N(t)) \in (\R^d)^N$ and $p(t)=(p_1(t),
\ldots, p_N(t))\in(\R^d)^N$ denote, respectively, the positions
$q_i\in\R^d$ and momenta $p_i\in\R^d$ of the particles. $B(t)=(B_1(t), \ldots,B_N(t))$ is a vector of mutually independent
$d$-dimensional Brownian Motions defined on a probability space $(\Omega, \mathcal{F}, \P)$.  The constant $\gamma>0$ governs
the intensity of the friction of the thermal medium while the constant
$T>0$ is the temperature.

The potential $U\colon(\R^d)^N\rightarrow [0, +\infty]$  defines the
interactions between the particles as well as any external
potential forces. For definiteness, the reader may find it useful to consider the
following singular example, which was one of our principle
motivations for this work:
\begin{align}
\label{eqn:LJP}
  U(q) = \sum_{i=1}^N U_{0}(q_i) + \sum_{i <j  } U_{I}(
  q_i - q_j).
\end{align}
In the expression above, the confining part $U_0$ is due to external forces
keeping the particles in the domain of definition, and is, for
example, often taken to be a polynomial growing at least as fast as a
quadratic such as $|q|^6$ or $(1-|q|^2)^2$.  The interaction part $U_I$ describes repulsive forces as particles approach one another at close range.  For example, one may consider the classical
\emph{Lennard-Jones} interaction of
\begin{align}
  U_I(q) = \frac{c_0}{|q|^{12}} -  \frac{c_1}{|q|^6}
\end{align}
for some positive constants $c_0$ and $c_1$.

So long as the potential $U$ is $C^2$ away from its singularities, one can use the Hamiltonian  
\begin{align*}
  H(p,q)=\frac{|p|^2}{2}  + U(q)
\end{align*}
to show that pathwise solutions of~\eqref{eqn:sdem} exist and are
unique for all finite times $t\geq 0$.  Indeed, pathwise solutions
exist and are unique up until the (random) time $\tau$ at which the
process exits the domain of definition.  The Hamiltonian then serves
as a basic type of Lyapunov function allowing one to conclude that
$\tau= \infty$ almost surely.  Extracting a convergence rate to
equilibrium, on the other hand, requires more structure and work, as
$H$ alone does not give the needed dissipative bound since the expected
rate of change of $H$ cannot be controlled by a function of $H$ itself.    

To get around this issue, if $U$ is absent of singularities, one can apply the perturbative `$pq$ trick', adding a term of the form $c\, p \cdot q$ for some $c>0$ to the Hamiltonian~\cite{MSH_02,Talay_2002}.  This results in a Lyapunov function $V$ of the form $V= H + c\, p \cdot q$ that ultimately implies geometric convergence to equilibrium in such cases.  Due to the presence of singularities in $U$, however, this simple trick does not work, and thus understanding how the dynamics~\eqref{eqn:sdem} converges to equilibrium is more involved.  Consequently, the system~\eqref{eqn:sdem} with a singular potential $U$ is not covered by most results on second-order Langevin dynamics. Notable exceptions are the
the papers~\cite{CG_10,Grothaus_Stilgenbauer_2015} where a mixture of martingale techniques and ideas from hypocoercivity are used to prove the existence of the process
as well as exponential mixing, in the sense of exponential
decoration, provided the system is started in the equilibrium measure $\mu$ given by 
\begin{align}\label{def:InvMeasure}
  \mu(dp \times dp) = \frac{1}{\mathcal{Z}} e^{-\frac1T H(p,q)} \, dq \times dp. 
\end{align}
In the above, $\mathcal{Z}>0$ is the normalization constant which makes $\mu$ a
probability measure.  

In contrast to these works, the authors in
\cite{cooke17:_geomet_ergod_two} establish the existence and exponential
convergence to $\mu$ starting from arbitrary
initial conditions in an appropriate weighted total variation distance
(see~\cite{HM_08, MTIII}). However, the setting of this paper was limited to a single particle in dimension $d=1$ interacting with the origin via a Lennard-Jones-like potential.  In this note, we (finally) succeed in producing the results we have
desired from the onset. In particular, we prove that for a wide class of potentials, which we call \emph{admissible}, the stochastic dynamics~\eqref{eqn:sdem} converges exponentially fast to $\mu$ in a weighted total
variation norm. This is done by constructing an explicit Lyapunov function.  The class of admissible potentials, defined below in
in Section~\ref{sec:main-results}, is comparable to those considered in
\cite{CG_10,Grothaus_Stilgenbauer_2015}, but the results are in line
with those proven in \cite{cooke17:_geomet_ergod_two} and hence stronger.
In particular, our convergence results allow for
the analysis of numerical methods used to simulate molecular dynamics
or sample from the density using Monte Carlo methods.

\section{Main Results}
\label{sec:main-results}

We begin by defining the class of \emph{admissible} potentials $U$.  Implicit in the definition of $\mu$ in \eqref{def:InvMeasure} is that $\mu$ is
normalizable which, in light of the definition of $H$, is equivalent to 
\begin{align}
  \label{eq:UIntegrable}
  \int_\mathcal{O} e^{-\frac1T U(q)} dq < \infty
\end{align}
where
\begin{align}
  \label{eq:Odef}
  \mathcal{O}= \{ q\in (\R^d)^N \, : \, U(q)<  \infty\}\,.
\end{align}

\begin{definition}\label{def:admissible}
We say that a function $U\colon(\R^d)^N\rightarrow [0, + \infty]$ is an
\emph{admissible potential}  if it  is satisfies the normalization 
condition \eqref{eq:UIntegrable} and the following regularity and
growth conditions:
\begin{itemize}
\item  $U\in C^\infty(\mathcal{O})$.  
\item  $\mathcal{O}$ is an open, path connected set.  Moreover for each $R>0$, the set 
\begin{align*}
\{ q\in (\R^d)^N \, : \,
U(q)< R\}
\end{align*}
is open and \emph{precompact}, i.e. has compact closure, in $(\R^d)^N$.
\item For any sequence $\{q_k\} \subseteq \mathcal{O}$ with
  $U(q_k)\rightarrow \infty$,  we have the following asymptotic properties: 
\begin{align*}
|\nabla U(q_k)| \rightarrow \infty\, \,\,\, \text{ and } \,\,\,\, \frac{|\nabla^2 U(q_k)|}{|\nabla U(q_k)|^2}\rightarrow 0 
\end{align*}
where $\nabla^2 $ denotes the Hessian operator. 
\end{itemize}
\end{definition}

For the remander of this note, we will assume that the potential $U$
is admissible and we will only consider dynamics on the state space $\mathcal{X}=
\mathcal{O} \times (\R^d)^N$.  We remark that if the set $\mathcal{O}$
is not pathwise connected then the dynamics will be restricted to each
connected component for all time and the theorems apply to each
connected component by setting $U=\infty$ on the other connected components.

We now state the following proposition giving global existence and
uniqueness of the process on $\mathcal{X}$.  Its proof is a simple
consequence of the proof of the main result (Theorem~\ref{thm:main})
below, though it could also be proven by considering only $H$ rather
than the Lyapunov function we will eventually construct.  
\begin{proposition}
\label{prop:exist} 
For every initial condition $x(0)=(q(0), p(0)) \in \mathcal{X}$, equation 
~\eqref{eqn:sdem} has a unique pathwise solution $x(t)=(q(t), p(t))$. Furthermore,  the solution 
remains in  $\mathcal{X}$ for all 
finite times $t\geq 0$ almost surely. 
\end{proposition}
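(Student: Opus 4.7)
The plan is to run the standard Hamiltonian-as-Lyapunov argument for non-explosion, using the admissibility assumption on $U$ only to ensure that the sublevel sets of $H$ form a compact exhaustion of $\mathcal{X}$.

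First, because $U\in C^\infty(\mathcal{O})$, the drift and diffusion coefficients of \eqref{eqn:sdem} are locally Lipschitz on the open set $\mathcal{X}=\mathcal{O}\times(\R^d)^N$. Standard SDE theory then produces a pathwise unique maximal strong solution $x(t)=(q(t),p(t))$ defined up to an explosion time
\begin{equation*}
\tau=\lim_{n\to\infty}\tau_n,\qquad \tau_n=\inf\{t\geq 0\,:\,x(t)\notin K_n\},
\end{equation*}
where $K_n=\{(q,p)\in\mathcal{X}\,:\,H(q,p)\leq n\}$. I claim each $K_n$ is compact in $\mathcal{X}$: the momentum component is confined to $\{|p|^2\leq 2n\}$, and the position component is contained in $\{U\leq n\}$, which by admissibility is precompact in $(\R^d)^N$; its closure cannot touch $\partial\mathcal{O}$ since $U\to\infty$ at the singular boundary, so the closure lies inside $\mathcal{O}$. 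Thus $K_n$ is compact in $\mathcal{X}$, and since $H<\infty$ on all of $\mathcal{X}$, the $K_n$ exhaust $\mathcal{X}$. It is therefore enough to prove that $\tau=\infty$ almost surely.

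The key computation is that the generator $\mathcal{L}$ of \eqref{eqn:sdem} applied to $H$ yields the cancellation
\begin{equation*}
\mathcal{L}H = p\cdot\nabla U(q) + \bigl(-\gamma p-\nabla U(q)\bigr)\cdot p + \gamma T\,\Delta_p\!\left(\tfrac{|p|^2}{2}\right) = -\gamma|p|^2 + \gamma T N d,
\end{equation*}
so $\mathcal{L}H\leq \gamma T N d$ everywhere on $\mathcal{X}$. Since $H\in C^\infty(\mathcal{X})$ and the stopped process $x(t\wedge\tau_n)$ stays in the compact set $K_n$, Itô's formula and optional stopping give
\begin{equation*}
\E H(x(t\wedge\tau_n))\leq H(x(0)) + \gamma T N d\cdot t
\end{equation*}
for any initial condition $x(0)\in\mathcal{X}$ and any $n$ large enough that $x(0)\in K_n$.

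Finally, on the event $\{\tau_n\leq t\}$ one has $H(x(\tau_n))=n$ by continuity of trajectories, so Markov's inequality yields
\begin{equation*}
n\,\P(\tau_n\leq t)\leq \E H(x(t\wedge\tau_n))\leq H(x(0))+\gamma T N d\cdot t.
\end{equation*}
Letting $n\to\infty$ gives $\P(\tau\leq t)=0$ for every $t>0$, so $\tau=\infty$ almost surely and $x(t)$ remains in $\mathcal{X}$ for all finite $t$. There is no real obstacle here: the whole argument is a routine Lyapunov non-explosion argument, and the only place admissibility is used nontrivially is in verifying that the Hamiltonian sublevel sets $K_n$ are compact in $\mathcal{X}$ — without this, the stopping times $\tau_n$ would not detect exits from $\mathcal{X}$ caused by $q$ approaching the singular set $\partial\mathcal{O}$.
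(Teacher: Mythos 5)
Your proof is correct and is essentially the route the paper itself endorses: the remark following Corollary~\ref{cor:1} states that a stopping-time argument using $H$ as the test function (rather than the full Lyapunov function $W$ of Proposition~\ref{prop:main}) suffices to derive global existence, and your argument carries that out in the standard Khasminskii non-explosion form. Two small points in your favor: you write the dimensionally correct identity $\mathcal{L}H=-\gamma|p|^2+\gamma T Nd$ (the paper's $\gamma T$ in~\eqref{eqn:LHeq} is shorthand), and you correctly isolate that the sole nontrivial role of admissibility here is to make the Hamiltonian sublevel sets $\{H\le n\}$ compact in $\mathcal{X}$ --- so that $\tau_n$ detects approach to $\partial\mathcal{O}$ as well as escape to spatial infinity --- which is exactly the hypothesis the paper leans on implicitly when it treats $H$ as a Lyapunov function up to $\tau$.
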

With Proposition~\ref{prop:exist} guaranteeing global
existence in time, we can define the Markov operator $P_t$ associated to
\eqref{eqn:sdem} by
\begin{align*}
  (P_t\phi)(q,p) = \E_{(q,p)} \phi(q(t),p(t))
\end{align*}
for all $(q,p) \in \mathcal{X}$, $t \geq 0$ and $\phi \colon
\mathcal{X} \rightarrow \R$ bounded, measurable. This action on test
functions induces the standard dual action, denoted $\nu P_t$, on probability measures
$\nu$ on Borel subsets $A$ of $\mathcal{X}$ through
\begin{align*}
  (\nu P_t)(A)  = \int_\mathcal{X} (P_t \one_A)(q,p) \, \nu(dq \times dp). 
\end{align*}
Here $\one_A$ is the indicator
function on the set $A$.

We now turn to characterizing convergence to
equilibrium for the system~\eqref{eqn:sdem}.  To define the weighted total variation norm which measures the convergence, for any $W\colon\mathcal{X}\rightarrow (0, \infty)$ measurable, we let $\mathcal{M}_W$
denote the set of probability measures $\nu$ on Borel subsets of $\mathcal{X}$ such that
\begin{align*}
\int_{\mathcal{X}} W(x) \, \nu(dx)< \infty.
\end{align*}
We equip the set $\mathcal{M}_W$ with the metric $\rho_W$ given by 
\begin{align}\label{eq:WTV}
\rho_W (\nu_1, \nu_2) = \sup_{\| \phi\|_W \leq 1}\int_{\mathcal{X}} \phi(x) (\nu_1(dx) - \nu_2(dx))
\end{align}
for any $\nu_1, \nu_2 \in \mathcal{M}_W$ where the seminorm $\| \cdot \|_W$ is defined for measurable $\phi\colon \mathcal{X}\rightarrow \R$ by
\begin{align*}
\| \phi \|_W = \sup_{x \in \mathcal{X}} \frac{|\phi(x)|}{1+W(x)}\,.
\end{align*}

With this setup, we will prove the following convergence result.
\begin{theorem}
\label{thm:main}
The standard Gibbs measure $\mu$ defined above in~\eqref{def:InvMeasure} is the unique invariant measure for the Markov process $x(t)=(q(t), p(t))$ on $\mathcal{X}$.  Furthermore for every $\b \in (0, \frac1{T})$, there exists $W\in C^\infty(\mathcal{X}; (0, \infty))$ and constants $C, \eta >0$ such that as $H(x)\rightarrow \infty$ with $x\in \mathcal{X}$  
\begin{align}
\label{eqn:Wasym}
W(x) = \exp\Big( b H(x)( 1+ o(1))\Big)
\end{align}
and such that for all $\nu \in \mathcal{M}_W$ and $t\geq 0$ 
\begin{align}
\label{eqn:geobound}
\rho_W(\nu P_t, \mu) \leq C e^{-\eta t} \rho_W(\nu, \mu).
\end{align}
\end{theorem}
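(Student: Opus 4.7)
The plan is to establish Theorem~\ref{thm:main} via the weighted Harris-type framework of~\cite{HM_08}. Invariance of $\mu$ under $P_t$ is a standard direct verification from the divergence form of the generator $\mathcal{L}$, so the main task reduces to establishing (i) a Lyapunov drift inequality
\begin{equation*}
  \mathcal{L}W \leq -\eta W + K\,\one_C
\end{equation*}
for some compact $C\subset\mathcal{X}$ and constants $\eta,K>0$, and (ii) a minorization condition for $P_T$ on a sufficiently large sublevel set of $W$. Once (i) and (ii) are in place, the Harris theorem yields uniqueness of $\mu$ and the contraction~\eqref{eqn:geobound}. Proposition~\ref{prop:exist} is also a by-product, since precompactness of the sublevel sets $\{W\leq R\}\subset\mathcal{X}$ guaranteed by admissibility rules out explosion in finite time.

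The heart of the argument, and the main obstacle, is constructing $W$ with the prescribed asymptotic~\eqref{eqn:Wasym}. A direct computation yields
\begin{equation*}
  \mathcal{L}e^{bH} = \bigl[b\gamma(bT-1)|p|^2 + b\gamma TdN\bigr]e^{bH},
\end{equation*}
so the restriction $b\in(0,1/T)$ provides dissipation in $|p|^2$ but not in $H$ itself. This is fatal in the singular regime $U(q)\gg|p|^2$, which is unavoidable near the singular set of $U$, and the classical $pq$-perturbation $H\mapsto H+c\,p\cdot q$ offers no remedy, since $q\cdot\nabla U$ does not dominate $U$ near the singularities. My plan is therefore to seek
\begin{equation*}
  W = \exp\bigl(bH + bc\,\Psi\bigr), \qquad \Psi(q,p) = p\cdot G(q),
\end{equation*}
with $G(q)$ chosen to be aligned with $\nabla U(q)$ in the singular region---heuristically $G\sim\nabla U/|\nabla U|^\alpha$ for a suitable exponent $\alpha\in(0,2)$, smoothly cut off to vanish in the bulk. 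The identity
\begin{equation*}
  \mathcal{L}(p\cdot G) = -G\cdot\nabla U - \gamma\,p\cdot G + p^\top(DG)\,p
\end{equation*}
shows that the admissibility hypotheses are tailor-made for the construction: $|\nabla U|\to\infty$ as $U\to\infty$ turns $-G\cdot\nabla U\sim-|\nabla U|^{2-\alpha}$ into a genuine dissipative sink that grows with $U$, while $|\nabla^2 U|/|\nabla U|^2\to 0$ controls $|DG|$ and ensures that the transport error $p^\top DG\,p$, together with the exponentiation contribution $\gamma Tb^2|p+cG|^2$ coming from It\^o's formula, is quantitatively absorbed by the dissipative terms $-b\gamma(1-bT)|p|^2$ and $-bc|\nabla U|^{2-\alpha}$. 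Interpolating via smooth cutoffs between this singular construction and the standard $pq$-trick in the region where $U$ is bounded should produce a single $W\in C^\infty(\mathcal{X};(0,\infty))$ for which~\eqref{eqn:Wasym} is transparent (since $|\Psi|/H\to 0$ by construction) and for which (i) holds.

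The minorization ingredient (ii) is comparatively routine. H\"ormander's bracket condition holds on $\mathcal{X}$ because the noise enters the $p$-variables directly and $[\partial_p,\,p\cdot\partial_q-\nabla U\cdot\partial_p]$ recovers $\partial_q$, so $P_t(x,\cdot)$ admits a smooth strictly positive density on $\mathcal{X}$ for every $t>0$. Combined with a Stroock--Varadhan support argument that exploits the path-connectedness of $\mathcal{O}$ from Definition~\ref{def:admissible} to steer the associated skeleton control system between any two points of $\mathcal{X}$, this yields a uniform lower bound on $P_T(x,\cdot)$ for $x$ ranging over any compact subset of $\mathcal{X}$---in particular over the sublevel sets of $W$, which are precompact by admissibility. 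Feeding (i) and (ii) into the Harris theorem of~\cite{HM_08} then delivers~\eqref{eqn:geobound} and uniqueness of $\mu$ simultaneously. I expect the algebraic bookkeeping behind (i)---verifying that the transport and exponentiation errors are quantitatively beaten by the dissipative terms under only the admissibility hypotheses, and arranging the cutoffs so that all pieces glue into one smooth $W$ on $\mathcal{X}$---to be by far the most delicate step.
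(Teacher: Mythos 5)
Your overall architecture matches the paper exactly: build $W = e^{bV}$ with $V = H + (\text{perturbation linear in }p)$, verify a drift inequality $\mathcal{L}W \le -cW + K$, establish minorization via H\"ormander hypoellipticity together with a Stroock--Varadhan control argument on the path-connected domain $\mathcal{O}$, and feed both into the weighted Harris theorem of \cite{HM_08}. The paper also takes the perturbation of the form $\Psi = p\cdot G(q)$ with $G$ parallel to $\nabla U$ and cut off to vanish where $U$ is small. However, there is a genuine gap in your choice of exponent.

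You propose $G \sim \nabla U/|\nabla U|^\alpha$ with $\alpha\in(0,2)$, so that the sink $-G\cdot\nabla U\sim -|\nabla U|^{2-\alpha}$ grows with $U$, and you claim the admissibility hypothesis $|\nabla^2U|/|\nabla U|^2\to 0$ ``controls $|DG|$.'' It does not, for $\alpha<2$. A direct computation gives $|DG|\lesssim |\nabla^2U|/|\nabla U|^\alpha = \big(|\nabla^2U|/|\nabla U|^2\big)\,|\nabla U|^{2-\alpha}$, and for $\alpha<2$ the factor $|\nabla U|^{2-\alpha}\to\infty$ can overwhelm the vanishing prefactor, so $|DG|$ need not even be bounded. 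The transport error is then $|p^\top DG\,p|\lesssim |DG|\,|p|^2$, which in the mixed regime of moderately large $|p|$ and large $U$ cannot be absorbed either by the kinetic sink $-b\gamma(1-bT)|p|^2$ (whose coefficient is a constant) or by the $|p|$-independent sink $-c\,|\nabla U|^{2-\alpha}$; Young's inequality produces quartics $|p|^4$, $|\nabla U|^{2(2-\alpha)}$ that you have no control over. The unique exponent that works under the stated admissibility conditions is $\alpha = 2$, i.e.\ $G = \nabla U/|\nabla U|^2$: then $|DG|\lesssim |\nabla^2U|/|\nabla U|^2\to 0$, so the transport error $|p^\top DG\,p|\le \varepsilon|p|^2$ is absorbed by the kinetic dissipation; $|G|=1/|\nabla U|\to 0$ so the exponentiation contribution $b\gamma T|\nabla_pV|^2$ reduces to $b\gamma T|p|^2$ plus vanishing corrections, which is where the restriction $b<1/T$ is used; and the sink $-G\cdot\nabla U\equiv -1$ is only a constant, but that is exactly what is required, since the inequality needed at the level of $V$ is $\mathcal{L}V + b\gamma T|\nabla_pV|^2\le -C$ off a compact set, not a sink that grows with $U$.

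Two smaller remarks. First, your worry that you may need to interpolate with the classical $pq$-trick ``in the bulk where $U$ is bounded'' is unfounded and points to a slight misreading of the admissibility assumption: the sublevel sets $\{U<R\}$ are \emph{precompact}, so there is no region with bounded $U$ and unbounded $q$; on $\{U<R\}$ one simply takes the perturbation to be identically zero, and $\mathcal{L}H = -\gamma|p|^2+\gamma TNd$ already gives the drift outside a compact set there. Second, what you call a by-product is indeed how the paper proves Proposition~\ref{prop:exist}; and your minorization sketch (smooth positive density from H\"ormander plus controllability) is exactly the paper's route through Proposition~\ref{prop:orbit} and Corollary~\ref{cor:2}.
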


\begin{remark}
Let $b\in (0, \frac1{T})$ and $W$ be the corresponding function given in the statement of the result above.  Observe that the asymptotic property~\eqref{eqn:Wasym} gives that $\mu \in \mathcal{M}_W$.  Furthermore, the result implies geometric convergence to equilibrium for any initial distribution $\nu$ on $\mathcal{X}$ with 
\begin{align*}
\int_{\mathcal{X}} e^{b H(x)}\,  \nu (dx)  < \infty.  
\end{align*}
Now restating the bound~\eqref{eqn:geobound} in terms of test functions $\phi\colon  (\R^d)^N\rightarrow \R$ with $\|\phi\|_W < \infty$, we find that there exists positive constants $C$ and $\eta$ (both depending on $b$) such that for any $x \in \mathcal{X}$ and any such $\phi$
\begin{align}
  \label{eq:testFunction}
 | P_t \phi(x) - \mu \phi | \leq C  \|\phi\|   e^{-\eta t}(1+ W(x)).
\end{align}
From~\eqref{eq:testFunction} and~\eqref{eqn:Wasym} one cannot expect this bound (and
hence Theorem~\ref{thm:main}) to hold if $b \geq \frac1T$ since for
such $b$, $\mu \phi = \infty$ for many functions $\phi$ with 
\begin{align*}
\sup_{x\in \mathcal{X}} \frac{|\phi(x)|}{\exp(b H(x))} < \infty. 
\end{align*}  
\end{remark}

In the process of proving Theorem~\ref {thm:main}, we also establish
the following result which is of independent interest.
\begin{proposition}
\label{prop:orbit}
For any $x \in \mathcal{X}$ and $t>0$,
$\supp P_t(x, \, \cdot \,) = \mathcal{X}$,
where $\supp \nu$ denotes the support of the measure $\nu$. 
For each $t>0$ and $x \in \mathcal{X}$, the measure $P_t(x, \,\cdot
\,)$ is absolutely continuous with respect to Lebesgue measure on
$\mathcal{X}$. Denoting the probability density of $P_t(x, \,\cdot
\,)$ by $r_t(x, y)$, the mapping $(t, x, y) \mapsto r_t(x, y)\colon (0, \infty) \times \mathcal{X} \times \mathcal{X}\rightarrow [0, \infty)$ is continuous. 
\end{proposition}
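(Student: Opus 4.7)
The plan is to prove both assertions by the standard hypoelliptic toolkit: H\"ormander's theorem for the existence, smoothness, and joint continuity of the transition density $r_t(x,y)$, and the Stroock--Varadhan support theorem combined with an explicit controllability construction for the identification of the topological support.

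For the density, write the generator of~\eqref{eqn:sdem} on $\mathcal{X}$ in H\"ormander form $L = X_0 + \sum_{k=1}^{Nd} X_k^2$, where
\begin{align*}
X_0 = p\cdot \nabla_q - (\gamma p + \nabla U(q))\cdot \nabla_p, \qquad X_k = \sqrt{\gamma T}\, \partial_{p_k},
\end{align*}
the index $k$ ranging over the $Nd$ momentum components. A direct Lie-bracket computation gives
\begin{align*}
[X_0, X_k] = -\sqrt{\gamma T}\, \partial_{q_k} + \gamma \sqrt{\gamma T}\, \partial_{p_k},
\end{align*}
so at every point of $\mathcal{X}$ the first-order family $\{X_k\}_k \cup \{[X_0,X_k]\}_k$ already spans the full tangent space. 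Hence the parabolic operator $\partial_t - L$ is hypoelliptic on $(0,\infty)\times \mathcal{X}$. Standard Malliavin-calculus or classical parabolic hypoelliptic estimates then give a density $r_t(x,y)$ of $P_t(x,\cdot)$ with respect to Lebesgue measure which is jointly $C^\infty$, in particular continuous, on $(0,\infty)\times \mathcal{X}\times\mathcal{X}$.

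For the support claim, the Stroock--Varadhan support theorem on the open manifold $\mathcal{X}$ identifies $\supp P_t(x_0,\cdot)$ with the closure in $\mathcal{X}$ of the set of endpoints at time $t$ of solutions of the deterministic controlled system
\begin{align*}
\dot q = p, \qquad \dot p = -\gamma p - \nabla U(q) + \sqrt{2\gamma T}\, u(s),
\end{align*}
starting at $x_0 = (q_0,p_0)$, as $u\colon[0,t]\to(\R^d)^N$ ranges over smooth controls whose trajectory stays in $\mathcal{X}$. To show the reachable set is all of $\mathcal{X}$, fix a target $x_1=(q_1,p_1)\in \mathcal{X}$ and pick any smooth curve $\tilde q\colon[0,t]\to \mathcal{O}$ with $\tilde q(0)=q_0$, $\tilde q(t)=q_1$, $\dot{\tilde q}(0)=p_0$, $\dot{\tilde q}(t)=p_1$. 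Such a $\tilde q$ exists because $\mathcal{O}$ is open and path connected: join $q_0$ to $q_1$ by a continuous path in $\mathcal{O}$, mollify it inside an open tube whose closure lies in $\mathcal{O}$, and then add a compactly supported smooth perturbation near the endpoints to install the prescribed velocities. Setting
\begin{align*}
u(s) = \frac{1}{\sqrt{2\gamma T}}\bigl(\ddot{\tilde q}(s) + \gamma\,\dot{\tilde q}(s) + \nabla U(\tilde q(s))\bigr),
\end{align*}
which is smooth since $\tilde q(s)\in\mathcal{O}$ and $U\in C^\infty(\mathcal{O})$, the corresponding controlled trajectory is exactly $(\tilde q(s),\dot{\tilde q}(s))$ and terminates at $x_1$.

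The main obstacle is not analytical but geometric: the control trajectory must remain inside $\mathcal{O}$ while simultaneously realizing the prescribed endpoint positions \emph{and} endpoint velocities. This is precisely the role played by openness and path connectedness of $\mathcal{O}$ in the definition of an admissible potential. With those two properties secured, the curve construction above is elementary, and the remainder of the argument is a routine application of H\"ormander's hypoellipticity and the Stroock--Varadhan support theorem.
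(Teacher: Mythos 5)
Your proposal is correct and follows essentially the same route as the paper: the support is identified via the Stroock--Varadhan support theorem reduced to an explicit controllability construction in $\mathcal{O}$ (matching endpoint positions and velocities with a smooth path, then reading off the control), and existence with joint continuity of the density follows from H\"ormander's hypoellipticity theorem applied to $\partial_t \pm \mathcal{L}$ and $\partial_t \pm \mathcal{L}^*$. The only difference is cosmetic: you write out the bracket $[X_0,X_k]$ explicitly whereas the paper cites it as a standard commutator computation.
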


\section{Building intuition}
 \label{sec:heuristics}

Since  equation \eqref{eqn:sdem} generates a hypoelliptic diffusion
on $\mathcal{X}$, the main missing ingredient needed to apply the
results from \cite{HM_08} to prove exponential convergence is the existence of a
Lyapunov function (see also \cite{MSH_02, MTIII}).  While there are many form this could take, the most basic is a function $W\in C^2(\mathcal{X}; \R)$ satisfying $W\rightarrow \infty$ as $H\rightarrow \infty$ and the following global bound 
\begin{align}\label{eq:lyap}
  \mathcal{L} W \leq -c W + K
\end{align}
for some positive constants $c$ and $K$. In the above, $\mathcal{L}$ is the
generator of the SDE~\eqref{eqn:sdem} given by
\begin{align}
\label{eqn:generatorsimple}
\mathcal{L}= p \cdot \nabla_q -\nabla U(q)\cdot \nabla_p  - \gamma p \cdot \nabla_p +
  \gamma T \Delta_p=\mathcal{H} + \gamma \mathcal{R}\,.
\end{align} 
where 
\begin{align}
\label{eqn:HamOp}
\mathcal{H}= p \cdot \nabla_q - \nabla U(q) \cdot \nabla_p 
\end{align}
is the Liouville operator generated by the
Hamiltonian dynamics and 
\begin{align*}
\mathcal{R}= -p \cdot \nabla_p + T \Delta_p
\end{align*}
is the generator of an Ornstein-Uhlenbeck process.

A first natural guess for such a Lyapunov function $W$ would be $W=H$, as the Hamiltonian is conserved under the formal dynamics obtained by setting $\gamma=0$ in~\eqref{eqn:generatorsimple} and $\mathcal{R}$ is a very dissipative operator, though
only in the $p$ directions.  That is, applying the generator to $H$
we obtain
\begin{align}
\label{eqn:LHeq}
  \mathcal{L} H = - \gamma p^2 + \gamma T.
\end{align}
While the righthand side of the equation above is dissipative for large $p$, it is not
comparable to $H$ for $H\gg1$ if $p$ is order one and $U(q)$ is large.  How to transfer this dissipation from from $p$ to $q$ is the central issue we face.   

In \cite{cooke17:_geomet_ergod_two,HairerMattingly:2009} the authors exploit the fact that at high energies ($H\gg 1$), the system is well
approximated by the deterministic Hamiltonian dynamics obtained by
formally setting $\gamma=0$ in equation~\eqref{eqn:generatorsimple}.  Using this observation, one then averages the effect of the dissipation obtained in $p$ as in~\eqref{eqn:LHeq} around the deterministic Hamiltonian dynamics to obtain 
\begin{align*}
  \mathcal{L} H \approx -c H^q +\gamma T
\end{align*} 
for some constants $c>0$ and $q \geq 1$. To make this intuition rigorous,
a Lyapunov function $W$ of the form $W=H+\Psi$ is constructed where the lower-order perturbation $\Psi$ solves the
following Poisson equation
\begin{align}\label{H}
  \mathcal{H} \Psi(p,q) = \gamma p^2 - \gamma \langle p^2 \rangle(H(p,q))
\end{align}
where $\mathcal{H}$ is Liouville operator defined in equation~\eqref{eqn:HamOp} and $\langle p^2 \rangle(h)$ is the average of
$p^2$ along the deterministic Hamiltonian orbits, i.e. the characteristics of $\mathcal{H}$, at energy level $h$. 

In the case when $U(q)$ has no singular terms and is asymptotically equivalent to $|q|^\alpha$ as $|q|\rightarrow \infty$ for some constant $\alpha > 2$, one finds that the perturbation $\Psi$ satisfies
\begin{align*}
\Psi(p,q)\approx c_\alpha \, p\cdot q\,\, \text{ as } \,\, |q|\rightarrow \infty
\end{align*}
for some constant $c_\alpha>0$. Note that the relevance of $\mathcal{H}$ in equation~\eqref{H} above can be seen in this specific situation since, under the
scaling 
\begin{align*}
(p,q) \mapsto (\lambda p , \lambda^{\frac{2}{\alpha}}q),
\end{align*}
$\mathcal{L}$ behaves like $\lambda^{2-\frac{2}{\alpha}}\mathcal{H}$
for $\lambda\gg 1$. This scaling, in particular, provides the natural route
to infinity if one wishes to move along the constant $H$ energy
shells. 

If $U(q)$ however includes a singular term, for example taking the form
\begin{align}\label{eq:singularExample}
  U(q)=A |q|^\alpha + \frac{B}{|q|^\beta}
\end{align}
for some constants $\alpha>1$ and $A, B, \beta >0$, one must additionally consider the scaling 
\begin{align*}
(p,q) \mapsto
(\lambda p, \lambda^{-\frac{2}{\beta}} q)
\end{align*}
for $\lambda \gg 1$ in order to zoom in on the large $H$ region around $q =0$. One then
solves an equation like \eqref{H}, but with $\mathcal{H}$ replaced by
the correct dominant operator along different routes to
$H=\infty$ when $q\approx 0$. For further details in the case~\eqref{eq:singularExample}, see~\cite{cooke17:_geomet_ergod_two}.

While these ideas are appealing, the scalings used in
\cite{cooke17:_geomet_ergod_two, HairerMattingly:2009} always reduce
the problem to studying the deterministic Hamiltonian dynamics at
infinity. This can lead to a number of difficulties as the potential $U(q)$ becomes
complicated, leading to possibly chaotic dynamics.  For this reason, the
class of allowed potentials in \cite{cooke17:_geomet_ergod_two} is
very simple and in \cite{HairerMattingly:2009} the analysis is limited to short chains
of oscillators. On the other hand, the papers
\cite{AKM12, HerMat15} successfully make use of such scalings as they simplify the dynamics in the relevant limiting regions. Before returning to this
discussion, we make a short remark concerning hypercoercivity. 

\begin{remark}
Understanding why $V(p,q)=H(p,q) + c_\alpha p\cdot q$ above is a Lyapunov
function for the Langevin dynamics with a nonsingular potential in
\cite{MSH_02,Talay_2002} motivated the development of hypercoercivity as in~\cite{Villani_2009}.  A more recent set of works develops hypercoercivity in the $L^2$ setting~\cite{MR3324910,Grothaus_Stilgenbauer_2016} which simplifies the
analysis in many respects. This was then adapted to dynamics with
singular potentials in~\cite{Grothaus_Stilgenbauer_2015} which
extended and simplified the original approach in~\cite{Conrad_Grothaus_2010}.  However in both \cite{Conrad_Grothaus_2010, Grothaus_Stilgenbauer_2015}, the system must start in
equilibrium. Hence, the exponential decoration of time integrals when
starting in equilibrium and the general long-time existence of
solutions are the principle results of these
articles. In contrast, we will prove exponential  convergence to
equilibrium starting from any initial distribution in $\mathcal{M}_W$ where for any $b\in (0, \frac1{T})$, $W$ can be constructed to have the asymptotic property~\eqref{eqn:Wasym}. 
\end{remark}

\subsection{Motivating Scaling and the Overdamped Limit}
\label{sec:OverDamped}
We now explore a different scaling which generates different
paths to $H$ large, hence producing different asymptotic dynamics.
Inspired by the heuristics in Section 1.2 of
\cite{Dolbeault_Mouhot_Schmeiser_2010}, we consider the scaling 
\begin{align}\label{eq:scaling}
(t, q,p) \mapsto (\lambda^{-2} t, \lambda^{-1} q, p)
\end{align}
as $\lambda \rightarrow \infty$. Applying this scaling with $U(q) =
A |q|^\alpha$ to the Kolmogorov backward equation produces 
\begin{align*}
 \lambda\partial_t u = \nabla_p H \cdot \nabla_q u - \lambda^{-\alpha}
  \nabla_q H \cdot \nabla_p u -  \lambda^{-1}\gamma p\cdot \nabla_p u - 
 \lambda^{-1} \gamma  T\Delta_p u.  
\end{align*}
Thus if we rescale time by $t \mapsto t\lambda$ and consider a sequence of rescaled
potentials $U_\lambda(q) = A \lambda^\alpha |q|^\alpha$, then one can see that by formal asymptotics, the $\lambda \rightarrow \infty$ limit corresponds to the overdamped ($\gamma\rightarrow \infty$) limit. For a fixed potential $U$, the generator of the overdamped limit is given by 
\begin{align*}
  \mathcal{L}_0 = - \nabla_q U(q)\cdot \nabla_q + T\Delta_q .
\end{align*}
Ignoring the complication of the interaction of the time change with
the ever-steepening rescaled potential $U_\lambda$, the fact that we arrived at the
overdamped limit is encouraging. In particular, the overdamped dynamics has a
natural Lyapunov function, namely $U(q)$. Notice that this analysis does not
require any detailed understanding of the undamped, deterministic
Hamiltonian dynamics. This suggests that for large $|q|$, we expect
$q_t$ to follow the overdamped dynamics. A similar heuristic
analysis holds as $q \rightarrow 0$ with a different rescaling of the
potential with $U$ of the form~\eqref{eq:singularExample}. 

Motivated by this scaling analysis as well as the identity~\eqref{eqn:LHeq}, we next scale to large $U(q)$ while keeping $p$ unchanged. 

\subsection{Scaling at Large $U$}

To illustrate the basic idea, we consider the 
dynamics of a single particle in dimension
$d=1$ with $U(q)$ given by~\eqref{eq:singularExample}.  Because the particle cannot penetrate the origin, we set $U(q)=+\infty$ for $q\leq 0$ and leave $U(q)$ defined as in~\eqref{eq:singularExample} for $q>0$.  At the end of the calculation, we will remark how one can generalize what follows to the full setting of~\eqref{eqn:sdem} with $U$ simply being admissible.     

With this setup, one can easily check that $\mathcal{O}= \R_{>0}$ the potential $U$ is
admissible. Hence Proposition~\ref{prop:exist}  and Theorem~\ref{thm:main} apply. Both
proofs turn on the existence of an appropriate Lyapunov function, and the
most basic Lyapunov structure is that which is expressed in the inequality
\eqref{eq:lyap}. To obtain results in the weighted total variation
norm from equation \eqref{eq:WTV}, we will build a Lyapunov function
of the form 
\begin{align}
\label{eqn:Wform}
W=e^{\b V}\,\,\text{ where } \,\, V= H(1+ o(1))\,\text{ as } \, H\rightarrow \infty
\end{align}    
and $b>0$.  To see why we seek a $W$ of this form, applying the generator to $W$ produces
\begin{align*}
  \mathcal{L} W &=\b W \big[ \mathcal{L}V  + \b \gamma T|\nabla_p V|^2 \big]. 
\end{align*}
Thus if we can construct $V\in C^2(\mathcal{X})$ satisfying~\eqref{eqn:Wform} and 
\begin{align}
\label{eqn:inWineq}
\mathcal{L} V +   \b \gamma T |\nabla_p V |^2 \leq - C
\end{align}
on $\{W \geq R\}$ for some constants $C, R>0$, then
inequality~\eqref{eq:lyap} would hold for $W$ as $W$ is bounded on the sublevel set $\{ W< R\}$.  

To build intuition for how to find $V$ above, let us first simplify the problem above and attempt to find a weaker type of Lyapunov function $V_0\in C^2(\mathcal{X})$ such that $V_0 \rightarrow \infty$ as $H\rightarrow \infty$ and  
\begin{align}
\label{eqn:desiredboundmock}
\mathcal{L}V_0 \leq - C \,\,\text{ on } \,\,  \{ H \geq R\}  
\end{align}
where $C>0$ is a constant and $R>0$ is a sufficiently large constant.  We will see later that the stronger property~\eqref{eqn:inWineq} follows almost immediately by slightly tweaking $V_0$ satisfying the apparently weaker condition~\eqref{eqn:desiredboundmock}.

As mentioned previously, a first natural guess for any Lyapunov function $V_0$ is $V_0=H$ itself.  Clearly, $H\rightarrow \infty$ as $H \rightarrow \infty$, so turning to checking~\eqref{eqn:desiredboundmock} we find that by~\eqref{eqn:LHeq}, the desired
bound~\eqref{eqn:desiredboundmock} is satisfied when $p$ is large
enough but fails in regions of $\mathcal{X}$ where $p$ is sufficiently
small and $U(q)$ is large. This further motivates the choice of
scaling in equation \eqref{eq:scaling} as it keeps $p$ fixed and
scales $q$, hence only affecting the size of $U(q)$.

With this perspective, we concentrate on modifying $H$ to gain some
dissipation in the system when most of the energy is potential energy.
Intuitively, such effects are contained within the forcing term
$-\nabla U(q) \cdot \nabla_p$ in~\eqref{eqn:generatorsimple}.  To do so, we perturb off of our initial guess $V_0=H$ to produce a Lyapunov function $V_0$ of the form 
\begin{align*}
V_0= H+ \psi 
\end{align*}
where $\psi \in C^2(\mathcal{X})$ satisfies the following two qualitative properties:
\begin{itemize}
\item $\psi=o(H(q,p))$ as $H(q,p)\rightarrow \infty$, $(q,p) \in \mathcal{X}$;
\item $\psi$ provides a ``dissipative effect'' in $q$ resulting in the bound~\eqref{eqn:desiredboundmock}.  
\end{itemize}
To see how we should pick $\psi$, we start by analyzing the dynamics
when $p$ is bounded and $U(q)$ is large.  By combining the ideas of Section~\ref{sec:OverDamped} with those in the works~\cite{cooke17:_geomet_ergod_two, HerMat15}, we will scale the infinitesimal generator $\mathcal{L}$ while keeping only the dominant 
terms. Using this reduced infinitesimal generator will yield a simple PDE for the perturbation $\psi$ so that~\eqref{eqn:desiredboundmock} is satisfied.    

Given the structure of $U$ in equation~\eqref{eq:singularExample},
there are two qualitatively different routes to infinite energy in $q$:
$q\rightarrow \infty$ and $q \rightarrow 0^+$.  In light of this
observation, for $\lambda \gg1$ we will make the following two
substitutions on the generator $\mathcal{L} \colon (q,p) = (\lambda Q,P)$ to observe the large $q$ behavior and $(q,p)= (\lambda^{-1} Q_0, P_0)$ to see the small $q$ dynamics.  Note that, when written in the coordinates $(Q, P)$, since $d=N=1$ the generator has the form
\begin{align*}
\mathcal{L}_{(Q,P)} &= \lambda^{-1} P \partial_Q - \gamma P  \partial_P - U'(\lambda Q) \partial_P + \gamma T \partial_P^2\\
&\approx - \alpha A \lambda^{\alpha -1} Q^{\alpha-1}   \partial_P  .   
\end{align*}
On the other hand, when written in the coordinates $(Q_0, P_0)$ the generator satisfies
\begin{align*}
\mathcal{L}_{(Q_0, P_0)} &= \lambda P_0 \partial_{Q_0} - \gamma P_0 \partial_{P_0} - U'(\lambda^{-1} Q_0)\partial_{P_0} + \gamma T \partial_{P_0}^2\\
&\approx \beta B \lambda^{\beta+1} Q_0^{-\beta-1}  \partial_{P_0} .  
\end{align*}
Thus, in either of the two regimes when $U(q)$ is large and $p$ is bounded, the dominant balance of terms in $\mathcal{L}$ is encapsulated in the operator
\begin{align}
\mathcal{A}= - U'(q) \partial_p.  
\end{align}
Hence we seek a perturbation $\psi$ satisfying the PDE 
\begin{align}
\label{eqn:thePDE}
\mathcal{A} \psi(q,p) = - \kappa \,\, \text{ on } \,\,  \mathcal{X} \cap \{ U \geq R\} 
\end{align}
where $\kappa >0$ is a constant and $R>0$ is sufficiently large.  Since $|U'(q)| \rightarrow \infty$ as $U(q)\rightarrow \infty$, whenever $R>0$ is large enough
\begin{align*}
U'(q) \neq 0 \,\,\text{ on } \,\, \mathcal{X} \cap \{ U \geq R \}.  
\end{align*}
Thus the general solution of equation~\eqref{eqn:thePDE} is of the form
\begin{align}
\psi(q,p) = \kappa \frac{p}{U'(q)}  + \phi(q) \,\, \text{ on } \,\,  \mathcal{X} \cap \{U \geq R\}
\end{align}  
where $\phi$ is any function.  Picking $\phi\equiv 0$, it is clear that $\psi$ is asymptotically dominated by $H$ at large energies in the set $\mathcal{X} \cap \{U \geq R\}$ for $R>0$ large enough, as $U$ is admissible.  Assuming we have extended $\psi$ to be $C^2$ on $\mathcal{X}$, we will now see that $V_0=H+ \psi$ satisfies the bound~\eqref{eqn:desiredboundmock} on the restricted set $ \mathcal{X} \cap \{ U \geq R\}$ for $R>0$ large enough. 

First pick $\kappa= 2\gamma T$.  This choice implies that for $(q,p)\in   \mathcal{X} \cap \{ U \geq R\}$
\begin{align*}
\mathcal{L} V_0(q,p)&= \mathcal{L} (H + \psi)(q,p) = \mathcal{L} H(q,p) + \mathcal{A} \psi(q,p)+ (\mathcal{L}-\mathcal{A})\psi(q,p)\\
&= -\gamma p^2 -\gamma T    - 2\gamma T \frac{p^2 U''(q)}{(U'(q))^2} - 2\gamma^2 T \frac{p}{U'(q)}\\
& \leq  -\frac{3}{4}\gamma p^2 -\gamma T -2\gamma T\frac{p^2 U''(q)}{(U'(q))^2} + \frac{4\gamma^3 T^2}{(U'(q))^2}
\end{align*}
where the last line follows by Young's inequality applied to the term $-2\gamma^2 T p/ U'(q)$.  
Since $U$ is admissible, we note that we may pick $R>0$ large enough so that on $\mathcal{X} \cap \{U \geq R\}$ 
\begin{align*}
\frac{|U''(q)|}{(U'(q))^2} \leq \frac{1}{8 T}\,\, \text{ and } \,\, (U'(q))^2 \geq 8 \gamma^2 T.
\end{align*}
Combining this with the previous estimate implies that on $\mathcal{X} \cap \{U \geq R\}$
\begin{align}
\label{eqn:estV_0}
\mathcal{L} V_0(q,p)&\leq - \frac{1}{2} \gamma p^2 - \frac{1}{2}\gamma T\leq - \frac{1}{2}\gamma T.    
\end{align}
To produce the bound on the set $\mathcal{X} \cap \{ H \geq
R\}$, one can simply combine the estimate~\eqref{eqn:estV_0} with~\eqref{eqn:LHeq} by
setting $\psi=0$ on the set $\{ U\leq R'\}$ where $ R>R'\gg 1$ are
chosen appropriately and by smoothly interpolating between $H$ and
$H+\psi$ on their respective sets of definition.  

\begin{remark}
In the above, we have not chosen the constant $\kappa$ carefully.  With a little more care in Proposition~\ref{prop:main} of Section~\ref{sec:proof}, we will see that the value of $\b$ in $W=e^{bV}$ must be restricted to the interval $(0, \frac1{T})$, and that the choice of $\kappa$ will depend on the choice of $b$.  This choice of $\kappa$ will then allow us to set $V=V_0$ and conclude that $W=e^{bV}$ satisfies~\eqref{eq:lyap}.  Note that the restriction $b\in (0, \frac1{T})$ is natural from the point of view of the invariant distribution $\mu$, as any Lyapunov function $V$ satisfying~\eqref{eq:lyap} must belong to $L^1(\mu)$ (see, for example, \cite{HairerMattingly:2009, HerMat15}).  Thus in this sense the construction of $W$ is optimal.           
\end{remark}

We close this section by observing that there is a natural
generalization of this calculation to an arbitrary number of particles $(N\geq 1)$ in an arbitrary number of dimensions ($d\geq 1$).
\begin{remark}
In the full setting of~\eqref{eqn:sdem}, equation~\eqref{eqn:thePDE} generalizes to  
\begin{align}
\mathcal{A} \psi(q,p):=-\nabla U(q) \cdot \nabla_p \psi =  -\kappa\,\, \text{ on } \,\, \mathcal{X} \cap \{ U \geq R\}. 
\end{align}
Assuming that the potential $U\colon (\R^d)^N\rightarrow [0, \infty]$ is admissible, for $R>0$ large enough a particular solution of this equation is 
\begin{align}
\psi(q,p) =  \kappa \frac{p \cdot \nabla U(q)}{|\nabla U(q)|^2}.  
\end{align}
Extending $\psi$ to be $C^2$ on $\mathcal{X}$, we will see that $V_0=H+\psi$ satisfies 
\begin{align}
\mathcal{L} V_0 \leq -C \,\, \text{ on } \,\, \mathcal{X}\cap \{ H \geq R\} 
\end{align}
for $R>0$ large enough where $\mathcal{L}$ is the infinitesimal generator of the process~\eqref{eqn:sdem}.  
 \end{remark}

\section{Examples of Admissible Potentials}

Before proving Theorem~\ref{thm:main}, in this section we discuss in detail examples of admissible potentials $U$. We begin with a simple
$U$ which is absent of any singularities. Convergence results concerning~\eqref{eqn:sdem} under such conditions are certainly already covered by existing results (see, for example, \cite{Grothaus_Stilgenbauer_2016, MSH_02,Talay_2002}), but we include this example both as a warm-up and to show that Theorem~\ref{thm:main} subsumes these cases as well. Our second example returns to a single particle interacting with the origin via a singular potential, as was discussed in
Section~\ref{sec:heuristics} and studied in detail in~\cite{cooke17:_geomet_ergod_two}. In the final example, we consider the Lennard-Jones example mentioned in the introduction. 

\begin{example}
For simplicity, first suppose $U\in C^\infty( (\R^d)^N; [0, +\infty))$ is of the form 
\begin{align}
U(q)= \alpha |q|^{2k} + \phi(q)
\end{align}
where $\phi\colon (\R^d)^N \rightarrow \R$ is a polynomial of degree $j< 2k$.  In this case
\begin{align*}
 \mathcal{O}=\{ q\in (\R^d)^N \, : \, U(q)< \infty \} = (\R^d)^N.
 \end{align*}
Hence $\mathcal{O}$ is clearly open, path connected and the set $\{ q\in (\R^d)^N \, : \, U(q) < R\}$ is precompact for each $R>0$.  The finiteness condition~\eqref{eq:UIntegrable} is also clearly satisfied.  To see the needed asymptotic properties, first observe that there exist constants $c_1, C_i>0$ such that for all $|q| >0$ large enough 
\begin{align*}
| \nabla U(q)|\geq c_1|q|^{2k-1}- C_1 \qquad \,\, \text{ and } \,\, \qquad \frac{|\nabla^2 U(q)|}{|\nabla U(q)|^2}\leq \frac{C_3 |q|^{2k-2}+C_4}{|q|^{4k-2}}.  
\end{align*}
Hence, letting $\{q_k\}\subseteq (\R^d)^N$ be any sequence with $|q_k| \rightarrow \infty$ as $k\rightarrow \infty$, we have 
\begin{align*}
|\nabla U(q_k)| \rightarrow \infty \qquad \,\, \text{ and } \,\, \qquad \frac{|\nabla^2 U(q_k)|}{|\nabla U(q_k)|^2}\rightarrow 0
\end{align*}
as $k\rightarrow \infty$.  Thus, $U$ is an admissible potential.  
\end{example}

\begin{example}
\label{ex:oneparticle}
As we have discussed to some extent previously in Section~\ref{sec:heuristics}, assume in this example that $N=d=1$ and consider $U\colon\R\rightarrow [0, +\infty]$ of the form
\begin{align*}
U(q)= \begin{cases}
\displaystyle{Aq^\alpha + \frac{B}{q^\beta}} + \phi(q) & \text{ if } q>0\\
+\infty & \text{ if }\,\, q \leq 0
\end{cases}
\end{align*}
where $\alpha >1,\, A, B, \beta >0$ and $\phi\in C^\infty( \R_{>0} ; \R)$ is \emph{lower-order relative to} the leading order term $Aq^\alpha + Bq^{-\beta}$; that is, 
\begin{align*}
\lim_{q\rightarrow \infty} q^{-\alpha} \phi(q)= \lim_{q\rightarrow \infty} q^{1-\alpha} \phi'(q)= \lim_{q\rightarrow \infty} q^{2-\alpha} \phi''(q) = 0 
\end{align*} 
and
\begin{align*}
\lim_{q\rightarrow 0^+} q^\beta \phi(q)= \lim_{q\rightarrow 0^+} q^{\beta +1} \phi'(q)= \lim_{q\rightarrow 0^+} q^{\beta+2} \phi''(q) = 0. 
\end{align*} 
Equation~\eqref{eqn:sdem} with this choice of potential $U$ was studied extensively in~\cite{cooke17:_geomet_ergod_two}, but under the more restrictive conditions that $\alpha >2$ and that $\phi$ was a linear combination of powers of $q$ (including positive and negative) satisfying the above asymptotics.  We will see that in the more general setting considered above, $U$ is admissible.  Thus Theorem~\ref{thm:main} still applies.  

Here
\begin{align*}
\mathcal{O}= \{ q\in \R \, : \, U(q)< \infty\} = \R_{>0}
\end{align*}
is clearly open and path connected.  It is also easy to see that for each $R>0$ the set 
$\{ q\in \R \, : \, U(q) < R\}$ is precompact and the finiteness condition~\eqref{eq:UIntegrable} is satisfied.  Furthermore, by the asymptotics assumed of the lower-order perturbation $\phi$, for any sequence $q_k \in \R_{>0}$ with $q_k \rightarrow 0^+$ or $q_k \rightarrow \infty$ we have that 
\begin{align*}
|U'(q_k)|& = |A\alpha q_k^{\alpha-1} - B \beta q_k^{-\beta -1} + \phi'(q_k)|
&= \begin{cases}
A\alpha q^{\alpha -1} (1 + o(1)) &\text{ if } \,q_k\rightarrow \infty\\
B \beta q^{-\beta-1} (1 + o(1)) & \text{ if } \, q_k\rightarrow 0^+
\end{cases}. 
\end{align*}  
Thus in either case, $|U'(q_k)| \rightarrow \infty$ as $k\rightarrow \infty$.  Also observe that 
\begin{align*}
\frac{|U''(q_k)|}{|U'(q_k)|^2}&= \frac{|A\alpha(\alpha -1) q_k^{\alpha-2} + B \beta(\beta+1) q_k^{-\beta-2} + \phi''(q_k)|}{|A\alpha q_k^{\alpha-1} - B \beta q_k^{-\beta -1} + \phi'(q_k)|^2}\\
&= \begin{cases}
\frac{(\alpha-1)}{A\alpha} q_k^{-\alpha}(1+o(1))& \text{ if } q_k \rightarrow \infty\\
\frac{(\beta +1)}{B \beta } q_k^{\beta} (1+o(1)) & \text{ if } q_k \rightarrow 0^+\end{cases}.
\end{align*}
Thus in either case, $|U''(q_k)|/|U'(q_k)|^2\rightarrow 0$ as $k\rightarrow \infty.$  Hence, $U$ is an admissible potential.  
\end{example}

We next return to the Lennard-Jones example discussed in the introduction. Recall that this particular potential was the original motivation for this work.  
\begin{example}
\label{ex:singularH}
Consider now $N\geq 2$ particles in any dimension $d\geq 1$.  Here $U\colon(\R^d)^N\rightarrow [0, +\infty]$ will be of the form 
\begin{align*}
U(q)=\sum_{i=1}^N U_0(q_i) + \sum_{i< j} U_I(q_i -q_j)
\end{align*}  
where $q=(q_1, q_2, \ldots, q_N) \in (\R^d)^N$.  
We assume that $U_0 \in C^\infty(\R^d; [0, \infty))$ satisfies
\begin{align*}
U_0(x) = A |x|^{\alpha} + \phi_0(x) \,\, \text{ for } \,\, |x| \geq 1
\end{align*} 
where $A>0, \alpha >1$ and the perturbative part $\phi_0 \in C^\infty(\R^d)$ has the following asymptotic properties
\begin{align*}
\lim_{|x|\rightarrow \infty} |x|^{-\alpha} |\phi_0(x)|=\lim_{|x|\rightarrow \infty}  |x|^{1-\alpha} |\nabla \phi_0(x)| =\lim_{|x|\rightarrow \infty} |x|^{2-\alpha}| \nabla^2 \phi_0(x)|=0.
\end{align*}
For the interaction part of the potential, we have to be slightly careful in dimension $d=1$ to assure that the set $\mathcal{O}$ is path connected, as individual particles cannot pass one another.  In particular, in dimension $d=1$ we assume that $U_I \colon\R \rightarrow [0, +\infty]$ satisfies 
\begin{align*}
U_I(x)= \begin{cases}
B |x|^{-\beta} + \phi_I(x) & \text{ if } x < 0\\
+ \infty & \text{ if } x\geq 0
\end{cases} 
\end{align*}
and in dimension $d\geq 2$ we assume that $U_I\colon \R^d \rightarrow [0, +\infty]$ has 
\begin{align*}
U_I(x)= \begin{cases}
B |x|^{-\beta} + \phi_I(x) & \text{ if } x \neq 0\\
+ \infty & \text{ if } x= 0
\end{cases} .
\end{align*} 
In both expressions above, $B, \beta>0$ are constants.  The function $\phi_I$ is assumed to be $C^\infty$ on its respective domain ($x<0$ in $d=1$ and $x\neq 0$ in $d\geq 2$) and is assumed to satisfy the following asymptotic properties 
\begin{align*}
\lim_{x\rightarrow 0} |x|^\beta | \phi_I(x)|=\lim_{x\rightarrow 0}  |x|^{\beta+1} |\nabla \phi_I(x)| =\lim_{x\rightarrow 0} |x|^{\beta+2}| \nabla^2 \phi_I(x)|=0.
\end{align*}
By construction, note that 
\begin{align*}
\mathcal{O}= \begin{cases}
\big\{ (q_1, \ldots, q_N) \in \R^N \,: \, q_1< q_2 < \cdots < q_N \big\} & \text{ if } d=1\\
\big\{(q_1, \ldots, q_N) \in (\R^d)^N \,: \, q_i \neq q_j, \, i\neq j \big\} & \text{ if } d\geq 2
\end{cases}.  
\end{align*}
One can readily verify that $\mathcal{O}$ is open and path connected, while 
\begin{align*}
\{q\in (\R^d)^N \, : \,  U(q) < R\}\end{align*}
is precompact for every $R>0$.  The integrability condition~\eqref{eq:UIntegrable} is also satisfied.  While we expect the needed asymptotic properties of $|\nabla U|$ and $|\nabla^2 U|/|\nabla U|^2$ to be intuitively true given the outcome and structure of the potential in the previous example, they take a little more work to establish here due to the particle interactions.  The calculation which proves this, thus showing $U$ is admissible, is saved for the appendix (cf. Lemma~\ref{lem:singularH}).
\end{example}

\section{Proof of Theorem~\ref{thm:main}}
\label{sec:proof}

In this section we prove Theorem~\ref{thm:main}.  We begin by showing the following proposition, which is the main result on which the theorem relies.  The construction of the Lyapunov function below follows the heuristics outlined in Section~\ref{sec:heuristics}.  

\begin{proposition}
\label{prop:main}
For every $\b \in (0,\frac1{T})$, there exists $W \in C^{\infty}(\mathcal{X}; (0, \infty))$ satisfying the following two properties:
\begin{itemize}
\item[($\ell1$)] As $H(q,p) \rightarrow \infty$ with $(q,p) \in \mathcal{X}$
\begin{align*}
W(q,p) = \exp\big(\b H(q,p)( 1+ o(1))\big).
\end{align*}
In particular, $W(q,p) \rightarrow \infty$ as $H(q,p) \rightarrow \infty$, $(q,p) \in \mathcal{X}$.
\item[($\ell2$)]  There exist constants $c, K>0$ such that
\begin{align*}
\mathcal{L} W(q,p) \leq -c W(q,p) + K
\end{align*}
for all $(q,p)\in \mathcal{X}$.  Here we recall that $\mathcal{L}$ is the generator of the Markov process $(q(t), p(t))$ solving~\eqref{eqn:sdem}.     
\end{itemize}
\end{proposition}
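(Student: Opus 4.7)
\noindent\textbf{Proof plan for Proposition~\ref{prop:main}.}

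Following the heuristics of Section~\ref{sec:heuristics}, the plan is to construct $W = e^{b V}$ with $V = H + \psi$, where the perturbation $\psi$ is designed so that the cross term $\mathcal{A}\psi = -\nabla U \cdot \nabla_p \psi$ produces a strictly negative drift precisely where the naive bound $\mathcal{L} H = -\gamma|p|^2 + \gamma T dN$ fails, namely where $U(q)$ is very large and $|p|$ is small. Concretely, I would set
\begin{equation*}
\tilde\psi(q,p) \;=\; \kappa\,\frac{p\cdot \nabla U(q)}{|\nabla U(q)|^2},
\end{equation*}
defined on the open set where $|\nabla U|>0$. By admissibility, $|\nabla U(q)|\to\infty$ as $U(q)\to\infty$, so $|\nabla U|$ is bounded below by a positive constant on $\{U\ge R_0\}$ for some large $R_0>0$. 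Choose a smooth cutoff $\chi\colon [0,\infty)\to[0,1]$ with $\chi\equiv 0$ on $[0,R_0]$ and $\chi\equiv 1$ on $[2R_0,\infty)$, and define $\psi(q,p) = \chi(U(q))\,\tilde\psi(q,p)$, extended by $0$ where undefined. This gives $\psi\in C^\infty(\mathcal{X})$. The constant $\kappa>0$ will be chosen at the end, depending on $b, \gamma, T, d, N$.

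The asymptotic property ($\ell 1$) is straightforward: on $\{U\le 2R_0\}$, $q$ lies in a precompact set and $\psi$ is bounded by $C|p|$, which is $o(|p|^2) = o(H)$ as $|p|\to\infty$; on $\{U\ge R_0\}$, $|\psi|\le \kappa|p|/|\nabla U|$, and since $|\nabla U|\to\infty$ and $|p|\le\sqrt{2H}$, Young's inequality gives $|\psi|\le \epsilon H + C_\epsilon/|\nabla U|^2 = o(H)$. Hence $V = H(1+o(1))$ and the claimed asymptotic for $W=e^{bV}$ follows.

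For ($\ell 2$), the key identity is
\begin{equation*}
\mathcal{L} W \;=\; bW\bigl[\mathcal{L} V + b\gamma T |\nabla_p V|^2\bigr],
\end{equation*}
so it suffices to show $\mathcal{L} V + b\gamma T|\nabla_p V|^2 \le -c_0$ outside a compact set. On $\{U\ge 2R_0\}$ where $\psi=\tilde\psi$, direct computation yields $\mathcal{A}\psi = -\kappa$ and
\begin{align*}
\mathcal{L} V + b\gamma T|\nabla_p V|^2
&= -\gamma(1-bT)|p|^2 + \gamma T dN - \kappa \\
&\quad + (2bT-1)\gamma\kappa\,\frac{p\cdot\nabla U}{|\nabla U|^2}
+ p\cdot\nabla_q\psi + \frac{b\gamma T \kappa^2}{|\nabla U|^2}.
\end{align*}
The term $p\cdot\nabla_q\psi$ is controlled by a multiple of $|p|^2\,|\nabla^2 U|/|\nabla U|^2$, which by admissibility tends to $0$ as $U\to\infty$. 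Similarly the $p\cdot\nabla U/|\nabla U|^2$ term is handled by Young's inequality and absorbed into $\epsilon(1-bT)\gamma|p|^2 + C_\epsilon/|\nabla U|^2$. Since $b<1/T$ the coefficient $-\gamma(1-bT)$ is strictly negative; picking $\kappa>\gamma T dN$ and then $R_0$ large enough produces $\mathcal{L} V + b\gamma T|\nabla_p V|^2 \le -c_0<0$ on $\{U\ge 2R_0\}$. On the set $\{U\le 2R_0\}$, $q$ is confined to a compact region; the bare $-\gamma(1-bT)|p|^2 + \gamma T dN$ from $\mathcal{L} H$ (modified by the controlled cutoff terms supported where $R_0\le U\le 2R_0$) is negative once $|p|$ is large. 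Combining the two regimes and using that $W$ is bounded on the remaining compact set gives the global bound $\mathcal{L} W\le -cW + K$.

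The main technical obstacle is the cutoff region $R_0\le U\le 2R_0$: here $\psi$, $\nabla_p\psi$, $\nabla_q\psi$ each pick up contributions from $\chi'(U)\nabla U$, which must be shown not to destroy the sign of the drift. The remedy is to choose $R_0$ so large that $|\nabla^2 U|/|\nabla U|^2$ is already small there (using admissibility) and to exploit that this transition band is a $q$-compact slab, so the extra terms are uniformly bounded functions of $|p|$ of degree $\le 1$, hence harmless against the $-\gamma(1-bT)|p|^2$ reserve. All constants ($\kappa$, $R_0$, the Young parameters) are chosen in a definite order: first $b\in(0,1/T)$, then $\kappa$, then $R_0$.
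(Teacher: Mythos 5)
Your proposal matches the paper's proof in all essential respects: both take $W=\exp(bH+(\text{perturbation}))$ with the perturbation $\kappa\,\alpha(U(q))\,p\cdot\nabla U(q)/|\nabla U(q)|^2$ and a smooth cutoff in $U$, both reduce $(\ell2)$ to the pointwise bound on $\mathcal{L}W/W$, and both absorb the admissibility-controlled terms $|\nabla G|$ and $1/|\nabla U|^2$ into the $-\gamma(1-bT)|p|^2$ reserve before picking the cutoff thresholds. Writing $W=e^{bV}$ with $V=H+\psi$ instead of $W=e^{bH+\psi}$ is just a relabeling of $\kappa$.

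One step in your justification is off, though the plan survives it. You claim that on the cutoff band the extra terms coming from $\chi'(U)$ are ``of degree $\le 1$ in $|p|$, hence harmless against the $-\gamma(1-bT)|p|^2$ reserve.'' That is not correct: the $\chi'$ contribution to $p\cdot\nabla_q\psi$ is
\begin{align*}
\kappa\,\chi'(U(q))\,\frac{(p\cdot\nabla U(q))^2}{|\nabla U(q)|^2}\,,
\end{align*}
which is \emph{quadratic} in $p$ (this is also why the paper must bound $|\alpha'(U)|$ by a small multiple of $b\gamma(1-bT)/\kappa$, not merely observe that the band is compact in $q$). A bounded coefficient times $|p|^2$ is not beaten by $-\gamma(1-bT)|p|^2$; smallness of the coefficient is what is needed. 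Your construction does in fact supply that smallness, since a cutoff with $\chi\equiv 0$ on $[0,R_0]$ and $\chi\equiv 1$ on $[2R_0,\infty)$ can be taken with $|\chi'|\le C/R_0$, so taking $R_0$ large at the end of your constant order ($b$, then $\kappa$, then $R_0$) makes $\kappa|\chi'|$ as small as needed relative to $\gamma(1-bT)$. So the remedy is right, but the stated reason (degree counting) is wrong and should be replaced by the explicit smallness of $|\chi'|$. With that correction, your proposal is the paper's proof.
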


\begin{proof}
For each $R_2>R_1>0$, let $\alpha \in C^\infty([0, \infty) ; [0,1])$ satisfy
\begin{align*}
\label{eqn:cutoff}
\alpha(x)= \begin{cases}
1 & \text{ if } x\geq R_2 \\
0 & \text{ if } x\leq R_1
\end{cases}
\,\,\, \text{ and } \,\,\,
|\alpha'| \leq \frac{2}{R_2 - R_1}.
\end{align*}
Fix $\b\in (0,\frac1{T})$ and $\kappa> 3 \gamma Nd$.  Since $U$ is admissible, we may pick $R_1>0$ large enough such that $|\nabla U(q)|\geq  1$ for $U\geq R_1/2$.  Consider a candidate Lyapunov functional $W: \mathcal{X}\rightarrow (0, \infty)$ defined by
\begin{align}
W(q,p) = \exp( \b H(q,p) + \psi(q,p))
\end{align}
where $\psi \in C^\infty(\mathcal{X} ; [0, \infty))$ is given by
\begin{align}
\psi(q,p) =\begin{cases}
 \displaystyle{\kappa   \, \alpha(U(q))    \frac{p\cdot \nabla U(q)}{|\nabla U(q)|^2} }& \text{ if } U(q) \geq R_1/2 \\
 0 & \text{ otherwise}
 \end{cases}.
\end{align}
First observe that $W\in C^{\infty}(\mathcal{X}; (0, \infty))$ and as $H(q,p) \rightarrow \infty$ with $(q,p) \in \mathcal{X}$
\begin{align*}
W(q,p) = \exp\big(\b H(q,p)(1+ o(1))\big).
\end{align*}
Thus property $(\ell 1)$ is satisfied.  To verify condition $(\ell 2)$, note that for $(q,p) \in \mathcal{X}$
\begin{align}
\label{eqn:m1}\frac{\mathcal{L} W(q,p)}{W(q,p)}& = - \b\gamma(1-\b T) |p|^2 - \kappa \alpha(U(q)) + p \cdot \nabla_q \psi(q,p) \\
\nonumber & \qquad + (2\b T-1)  \gamma \psi(q,p)+ \frac{\kappa^2 \gamma T\alpha^2(U(q)) }{|\nabla U(q)|^2}+ \gamma  \b T N d.
\end{align}
To estimate each of the terms on the righthand side of the equation above, first observe that 
\begin{align}
\nonumber p \cdot \nabla_q \psi(q,p) &=   \kappa \alpha(U(q)) \sum_{i=1}^d \sum_{\ell =1}^{N} p_\ell^i p \cdot  \partial_{q_\ell^i}\bigg( \frac{\nabla U(q)}{|\nabla U(q)|^2} \bigg) \\
\nonumber &\qquad +  \kappa \sum_{i=1}^d  \sum_{\ell =1}^{N} p_\ell^i p \cdot  \alpha'(U(q)) \partial_{q_\ell^i} U(q)  \frac{\nabla U(q)}{|\nabla U(q)|^2} \\
\label{eqn:T1}& \leq \kappa  |\alpha(U(q))|   |\nabla G(q)| |p|^2 + \kappa  |\alpha'(U(q))| |p|^2
\end{align}
where $G= \nabla U/ |\nabla U|^2$.  Also, for any $C>0$ Young's inequality gives
\begin{align}
\label{eqn:T2}
|\psi(q,p)|\leq \frac{\kappa}{2C} |p|^2 + \frac{\kappa C}{2}  \frac{\alpha^2(U(q))}{|\nabla U(q)|^2}.
\end{align}
Putting the estimates~\eqref{eqn:T1} and~\eqref{eqn:T2} into~\eqref{eqn:m1} yields the following bound
\begin{align*}
\frac{\mathcal{L} W(q,p)}{W(q,p)}& \leq - \bigg\{ \b \gamma(1 -\b T) - \kappa  \alpha(U(q)) |\nabla G(q)| - \kappa \alpha'(U(q)) - \frac{|2\b T-1|  \gamma \kappa}{2C}\bigg\} |p|^2 \\
& \qquad- \kappa \alpha(U(q))  + \Big(\frac{\kappa C}{2} |2\b T-1| \gamma  + \kappa^2 \gamma T\Big) \alpha^2(U(q)) |G(q)|^2  + \gamma \b T N d.
\end{align*}
Recalling that $\kappa> 0$ has already been chosen to satisfy $\kappa>3\gamma Nd$, we now start picking the remaining parameters $C, R_2>0$.  First, pick
\begin{align*}
C>\frac{4| 2\b T-1| \kappa}{\b (1-\b T)}.
\end{align*}
 Since $U$ is admissible, we may increase $R_1>0$ if necessary such that whenever $U(q) \geq R_1/2$ the following estimates must be satisfied
\begin{align*}
 |\nabla G(q)| &< \frac{\b \gamma (1- \b T)}{8 \kappa }, \,\,\,\text{ and} \,\,\, \Big( \frac{\kappa C}{2}|2\b T-1| \gamma  + \kappa^2\gamma T\Big) |G(q)|^2 \leq \gamma \b TNd .
 \end{align*}
Finally, pick $R_2>R_1$ such that
\begin{align*}
|\alpha'(U(q))|\leq \frac{\b \gamma(1-\b T)}{8\kappa}.
\end{align*}
Using the definition of $\alpha$ and putting these bounds together produces the global estimate
\begin{align*}
\frac{\mathcal{L} W(q,p)}{W(q,p)}& \leq -\frac{\b \gamma(1-\b T)}{2} |p|^2 -  \kappa \alpha(U(q))  + 2\gamma \b TNd
\end{align*}
on $\mathcal{X}$.  From this bound, note that if $U(q) \geq R_2$ we have
\begin{align*}
\frac{\mathcal{L} W(q,p)}{W(q,p)}&\leq -  \kappa   + 2\gamma \b TNd < - \gamma Nd
\end{align*}
as $\b < T^{-1}$.  On the other hand if $|p|^2  > 6\gamma Nd/(\b \gamma(1-\b T))$, we also have the previous estimate.  This now proves that there exist constants $c, K>0$ such that
\begin{align}
\mathcal{L}W\leq - c W + K
\end{align}
on $\mathcal{X}$, as $W$ is bounded on the set
\begin{align*}
\{(q,p) \in \mathcal{X} \, : \,  |p|^2 \leq 6 \gamma N d/(\b \gamma(1-\b T)) \} \cap \{ (q,p) \in \mathcal{X} \, : \, U(q) \leq R_2\}.
\end{align*}
This establishes property $(\ell 2)$ and hence the result.
\end{proof}

We next turn to the proof of Proposition~\ref{prop:orbit} which, via the support theorems~\cite{SV_72, SV_73}, relies on a deterministic control problem associated to the stochastic system~\eqref{eqn:sdem}.  To introduce it, consider the ODE on $\mathcal{X}$
\begin{align}
\label{eqn:cont}
\frac{dQ}{dt} &= P\\
\nonumber \frac{dP}{dt}&=- \gamma P - \nabla U(Q) + \sqrt{2\gamma T } \xi
\end{align}
where $\xi\in C([0, \infty); (\R^d)^N)$ is a control.  For $x_0 \in \mathcal{X}$ and $t>0$, we define $\mathcal{X}_t(x_0)$ to be the set of points $x \in \mathcal{X}$ such that there exists $\xi\in C([0, \infty); (\R^d)^N)$ for which the solution of~\eqref{eqn:cont} is defined on $[0, t]$ and satisfies $(Q(0), P(0))=x_0$, $(Q(t), P(t))= x$ and
\begin{align}
\label{eqn:finitepot}
U(Q(s)) < \infty\,\, \text{ for all }\,\, 0 \leq s\leq t.
\end{align}

\begin{proof}[Proof of Proposition~\ref{prop:orbit}]
By the support theorems~\cite{SV_72, SV_73}, in order to prove that $\supp P_t(x_0, \cdot) = \mathcal{X}$ for any $x_0 \in \mathcal{X}$ and any $t>0$ and that $\supp \nu = \mathcal{X}$ for any invariant measure $\nu$ for $P_t$, it suffices to show that 
\begin{align*}
\mathcal{X}_t(x_0) = \mathcal{X}\qquad \forall x_0 \in \mathcal{X}, \, \forall t>0.  
\end{align*} 
To do so, first fix $x_0=(q_0, p_0), x=(q,p) \in \mathcal{X}$ and $t>0$.  As $\mathcal{O}$ is path connected, so is $\mathcal{X}= \mathcal{O}\times (\R^d)^N$.  Thus there exists a continuous path contained in $\mathcal{X}$ connecting $x_0$ and $x$.  We now use this path to construct an associated path that solves the control problem~\eqref{eqn:cont} at time $t$ moving from the initial condition $x_0$ to the desired target $x$.  Note that since $\mathcal{X}$ is open in $(\R^d)^N\times (\R^d)^N$, we may approximate this continuous path by $\zeta \in C^\infty([0, \infty); \mathcal{X})$ with $\zeta(0)=x_0$ and $\zeta(t)=x$.  Let $\psi\in C^\infty([0, \infty); \mathcal{O})$ denote the $q$-coordinate of the path $\zeta$.  By picking $\epsilon >0$ below small enough, using $\psi$ one can construct $\phi \in C^\infty([0,\infty); \mathcal{O})$ satisfying 
\begin{align*}
\phi(s)= \begin{cases}
q_0+ s p_0 & \text{ if } 0 \leq s \leq \epsilon\\
q +(s-t) p & \text{ if } t-\epsilon \leq s \leq t
\end{cases}.
\end{align*}
Consider now $(Q, P)\in C^\infty([0, \infty); \mathcal{X})$ defined by $Q=\phi$ and $P= \phi'$.  Then this choice of $(Q,P)$ solves the control problem~\eqref{eqn:cont} with $(Q(0), P(0))=(q_0, p_0)$ and $(Q(t), P(t))=(q,p)$ by picking
\begin{align*}
\xi= \frac{1}{\sqrt{2\gamma T}} \big(\phi'' + \gamma \phi'+ \nabla U(\phi) \big).
\end{align*}
We note that, carefully, since $\phi$ maps into $\mathcal{O}$ then $U(\phi(s))< \infty$ for all $0\leq s \leq t$.  This finishes the proof that $\mathcal{X}_t(x_0)=\mathcal{X}$ for all $x_0 \in \mathcal{X}$ and all $t>0$.

The remaining claims in the lemma concerning the existence and regularity of a probability density function $r_t(x,y)$ follow by H\"{o}rmander's hypoellipticity theorem~\cite{Hor_67}.  Indeed, one can readily check that since $U\in C^\infty(\mathcal{O})$, standard commutator calculations show that the the operators $\partial_t \pm \mathcal{L}$, $\partial_t \pm \mathcal{L}^*$, $\mathcal{L}$, $\mathcal{L}^*$ are hypoelliptic on the respective open sets $(0, \infty) \times \mathcal{X}$, $(0, \infty) \times \mathcal{X}$, $\mathcal{X}$, $\mathcal{X}$ where $\mathcal{L}^*$ denotes the formal $L^2$ adjoint of $\mathcal{L}$.
\end{proof}

Before concluding Theorem~\ref{thm:main}, we need the following two
corollaries.  These will allow us to more easily connect with and
apply the main result of \cite{HM_08} (or results in \cite{MTIII} ). The first is a translation of
the Lyapunov bound in $(\ell 2)$ of Proposition~\ref{prop:main} to a bound on
the Markov semigroup, while the second result establishes the needed
minorization condition. As both are relatively standard, the proofs can be found in the Appendix.

\begin{corollary}
\label{cor:1}
Fix $\b \in (0, T^{-1})$ and let $W$ be the Lyapunov function given in Proposition~\ref{prop:main} corresponding to this choice of $b$.  Also, let $c, K>0$ be the corresponding constants given in Proposition~\ref{prop:main} $(\ell2)$. Then for all $t\geq 0$ and $x\in \mathcal{X}$
\begin{align}
P_t W(x) \leq e^{-c t } W(x) + K/c .
\end{align}
\end{corollary}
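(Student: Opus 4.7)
The plan is to run the standard localized Dynkin/It\^o argument for $e^{ct}W$ and pass to the limit via Fatou. The main subtlety is that $W$ is not bounded on $\mathcal{X}$ and, more importantly, the coefficients of $\mathcal{L}$ blow up near the singular set (where $U = \infty$), so we cannot apply Dynkin's formula directly. We circumvent this by stopping at the exit times of sublevel sets of $W$, on which everything is smooth and bounded, and then pass to the limit using Proposition~\ref{prop:exist}, which guarantees the process stays in $\mathcal{X}$ for all finite times almost surely.

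First I would define, for each integer $n \geq 1$, the stopping times
\begin{align*}
\tau_n = \inf\{ t \geq 0 \, : \, W(x(t)) \geq n \}.
\end{align*}
Since $W \in C^\infty(\mathcal{X}; (0,\infty))$ and by property $(\ell 1)$ we have $W(x) \to \infty$ as $H(x) \to \infty$, and since Proposition~\ref{prop:exist} ensures $x(t) \in \mathcal{X}$ for all $t < \infty$ almost surely, a standard argument (combined with the Lyapunov bound $(\ell 2)$) shows that $\tau_n \to \infty$ almost surely as $n \to \infty$. On the stochastic interval $[0, \tau_n]$ the process remains in the precompact set $\{W \leq n\} \subset \mathcal{X}$, on which $W$, $\nabla_p W$ and $\mathcal{L} W$ are all bounded.

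Next I would apply It\^o's formula to $e^{ct} W(x(t))$ up to time $t \wedge \tau_n$. Using $(\ell 2)$, $cW + \mathcal{L} W \leq K$, and the fact that the stochastic integral term is a genuine martingale on $[0, t \wedge \tau_n]$ (because $\nabla_p W$ is bounded there), taking expectations yields
\begin{align*}
\E_x \bigl[ e^{c(t \wedge \tau_n)} W(x(t \wedge \tau_n)) \bigr]
&\leq W(x) + \E_x \!\int_0^{t \wedge \tau_n} e^{cs} K \, ds \\
&\leq W(x) + \frac{K}{c}\bigl( e^{ct} - 1 \bigr).
\end{align*}

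Finally I would send $n \to \infty$. Since $W > 0$ and $\tau_n \to \infty$ almost surely, Fatou's lemma gives
\begin{align*}
\E_x \bigl[ e^{ct} W(x(t)) \bigr]
\leq \liminf_{n \to \infty} \E_x \bigl[ e^{c(t \wedge \tau_n)} W(x(t \wedge \tau_n)) \bigr]
\leq W(x) + \frac{K}{c}\bigl( e^{ct} - 1 \bigr).
\end{align*}
Dividing by $e^{ct}$ produces the desired bound
\begin{align*}
P_t W(x) = \E_x W(x(t)) \leq e^{-ct} W(x) + \frac{K}{c}\bigl(1 - e^{-ct}\bigr) \leq e^{-ct} W(x) + \frac{K}{c}.
\end{align*}
The only nontrivial step is verifying $\tau_n \to \infty$ a.s., which I expect to be the main obstacle; this follows by noting that on $\{\tau_n < t\}$ the estimate above already gives $n \cdot \P_x(\tau_n < t) \leq W(x) + K(e^{ct}-1)/c$, so $\P_x(\tau_n < t) \to 0$ and hence the localization is legitimate.
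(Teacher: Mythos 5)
Your proof is correct and takes essentially the same route as the paper: localize with the stopping times $\tau_n = \inf\{t : W(x(t)) \geq n\}$, apply It\^o's formula to $e^{ct}W$ (the paper uses the equivalent supersolution $V(t,x)=e^{ct}(W(x)-K/c)$), and pass to the limit — you via Fatou, the paper via monotone convergence on $\mathbf{1}_{\{t<\tau_n\}}W(x(t))$, both justified by $\tau_n\to\infty$ a.s., which the paper obtains from Proposition~\ref{prop:exist} and you re-derive self-containedly from the localized estimate.
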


\begin{corollary}
\label{cor:2}
Fix $\b \in (0, T^{-1})$ and let $W$ denote the Lyapunov function assured by Proposition~\ref{prop:main} for this choice of $b$.  For $R>0$, define the set $\mathcal{C}_R=\{ x \in \mathcal{X} \, : \, W(x) \leq R \}.$  Then for each $R>0$, $\mathcal{C}_R$ is compact.  Furthermore, for each $R>0$ large enough and each $t_0>0$ there exists a Borel probability measure $\nu$ on $\mathcal{X}$ and $\alpha>0$ such that for all $A\subseteq \mathcal{X}$ Borel and all $x\in \mathcal{C}_R$
\begin{align}
P_{t_0}(x,A) \geq \alpha \, \nu(A).
\end{align}
\end{corollary}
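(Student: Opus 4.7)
\emph{Compactness.} The plan is to use property~$(\ell 1)$ directly. The asymptotic $W(q,p)=\exp(\b H(q,p)(1+o(1)))$ forces $H(q,p)\leq 2\log(R)/\b$ whenever $W(q,p)\leq R$ and $H(q,p)$ exceeds some threshold, so $\mathcal{C}_R\subseteq\{H\leq R'\}$ for some $R'=R'(R)$. Since $H=\tfrac{1}{2}|p|^2+U(q)$, this set is bounded in the ambient $(\R^d)^N\times(\R^d)^N$ by admissibility (precompactness of $\{U\leq R'\}$), and sits at positive distance from $\partial\mathcal{O}\times(\R^d)^N$ because $U\to\infty$ approaching $\partial\mathcal{O}$. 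Combined with the continuity of $W$ on $\mathcal{X}$, this yields that $\mathcal{C}_R$ is compact in $\mathcal{X}$.

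\emph{Minorization.} The plan is to build a single open set $B\subseteq\mathcal{X}$ with finite positive Lebesgue measure and a constant $\alpha_0>0$ such that $r_{t_0}(x,y)\geq\alpha_0$ uniformly for $(x,y)\in\mathcal{C}_R\times B$. Then $\nu(A):=|A\cap B|/|B|$ and $\alpha:=\alpha_0|B|$ give
\begin{equation*}
P_{t_0}(x,A) \,\geq\, \int_{A\cap B} r_{t_0}(x,y)\,dy \,\geq\, \alpha\,\nu(A), \qquad x\in\mathcal{C}_R,
\end{equation*}
as required. Proposition~\ref{prop:orbit} supplies the two ingredients needed: joint continuity of $r_t$ on $(0,\infty)\times\mathcal{X}\times\mathcal{X}$, and $\supp P_t(x,\cdot)=\mathcal{X}$ for every $x\in\mathcal{X}$ and $t>0$.

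The main obstacle is that full topological support together with continuity of $r_{t_0/2}$ does not immediately give a positive pointwise lower bound on $r_{t_0}$ over a fixed compact product set. To circumvent this, the plan is to use the Chapman--Kolmogorov identity
\begin{equation*}
r_{t_0}(x,y) \,=\, \int_{\mathcal{X}} r_{t_0/2}(x,z)\,r_{t_0/2}(z,y)\,dz.
\end{equation*}
Fix any $z^*\in\mathcal{X}$. Since $P_{t_0/2}(z^*,\cdot)$ has full support and density $r_{t_0/2}(z^*,\cdot)$, this density cannot vanish identically on any nonempty open set, so some $y^*\in\mathcal{X}$ satisfies $r_{t_0/2}(z^*,y^*)>0$. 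By joint continuity there are open precompact neighborhoods $U\ni z^*$ and $B\ni y^*$ in $\mathcal{X}$ and a constant $\epsilon_1>0$ with $r_{t_0/2}\geq\epsilon_1$ on $U\times B$, giving for $y\in B$ the bound $r_{t_0}(x,y)\geq\epsilon_1\,P_{t_0/2}(x,U)$. The map $x\mapsto P_{t_0/2}(x,U)=\int_U r_{t_0/2}(x,z)\,dz$ is continuous on $\mathcal{X}$ (dominated convergence, using joint continuity of $r_{t_0/2}$ and precompactness of $\overline U\subset\mathcal{X}$) and strictly positive by full support, so on the compact set $\mathcal{C}_R$ it attains a positive minimum $\epsilon_2$. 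Setting $\alpha_0:=\epsilon_1\epsilon_2$ completes the construction; the hypothesis ``$R$ large enough'' is needed only to guarantee $\mathcal{C}_R\neq\emptyset$.
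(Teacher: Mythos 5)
Your proof is correct and follows essentially the same route as the paper: compactness is deduced from property $(\ell 1)$ together with the precompactness built into the definition of an admissible potential, and the minorization is obtained by the Chapman--Kolmogorov decomposition through an intermediate open set, combined with joint continuity and strict positivity of $r_{t_0/2}$ from Proposition~\ref{prop:orbit}. The only cosmetic difference is that the paper anchors both the waypoint and the target inside $\mathrm{interior}(\mathcal{C}_R)$, whereas you place them anywhere in $\mathcal{X}$ and use ``$R$ large'' solely to ensure $\mathcal{C}_R\neq\emptyset$ — a harmless simplification.
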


We are now ready to prove Theorem~\ref{thm:main}.

\begin{proof}[Proof of Theorem~\ref{thm:main}]
Fix $\b \in (0, T^{-1})$ and $t_0>0$.  To see that the Gibbs measure
$\mu$ is invariant for the Markov process $x(t)$ on $\mathcal{X}$, one
needs only check that
\begin{align*}
\mathcal{L}^*( \exp(-T^{-1} H))(x) =0
\end{align*}
for all $x\in \mathcal{X}$, where we recall that $\mathcal{L}^*$ is the formal $L^2$-adjoint of the generator $\mathcal{L}$.  Uniqueness of the invariant measure then follows easily by ergodic decomposition, as $P_t$ is strong Feller and $\supp \nu = \mathcal{X}$ for any invariant measure $\nu$ for $x(t)$ by Proposition~\ref{prop:orbit} (see, for example, Theorem 4.2.1 in~\cite{DPZ_96} or Corollary~3.17 of~\cite{HM_06}).

We next seek to apply Theorem~1.2 of \cite{HM_08} to the embedded Markov chain on $\mathcal{X}$ given by $\mathcal{P}_n=P_{n t_0}$.  Note that Corollary~\ref{cor:1} and Corollary~\ref{cor:2} imply Assumption~1 and Assumption~2 of~\cite{HM_08}.  Thus applying Theorem~1.2 of \cite{HM_08}, we find that there exist constants $C >0$ and $\delta \in (0,1)$ such that
\begin{align}
\label{eqn:discbound}
\rho_W(\mathcal{P}_n \nu_1, \mathcal{P}_n \nu_2) \leq C \delta^{n} \rho_W(\nu_1, \nu_2)
\end{align}
for all $n\in \N\cup \{ 0\}$ and all $\nu_i \in \mathcal{M}_W$.  We next convert the bound~\eqref{eqn:discbound} to general times $t\geq 0$.  Let $t= nt_0 + \epsilon$ where $n\in \N\cup \{ 0\}$ and $\epsilon \in (0, t_0)$.  Observe that for any $\phi\colon \mathcal{X}\rightarrow \R$ measurable with $\|\phi\|_W\leq 1$, we have by Corollary~\ref{cor:1} 
\begin{align*}
\|P_\epsilon \phi\|_W &\leq \| \phi\|_W \sup_{x \in \mathcal{X}} \frac{1 +\E_{x} W(x(\epsilon))}{1+W(x)}\leq C'
\end{align*}
for some constant $C'>0$ which is independent of $\epsilon >0$.  By Fubini and the Chapman-Kolmogorov equations, one can then check that
\begin{align*}
\rho_W(P_t \nu_1, P_t \nu_2) = \rho_W( P_\epsilon \mathcal{P}_n \nu_1, P_\epsilon \mathcal{P}_n \nu_2)\leq C' \rho_W(\mathcal{P}_n \nu_1, \mathcal{P}_n \nu_2)\leq C C' \delta^n \rho_W(\nu_1, \nu_2).
\end{align*}
Picking $\eta= - t_0^{-1} \log \delta$ and $C''= C C'/ \delta^{1/t_0}$ we obtain the bound
\begin{align*}
\rho_W(P_t \nu_1, P_t \nu_2) \leq C'' e^{-\eta t } \rho_W(\nu_1, \nu_2)
\end{align*}
which is satisfied for all $t\geq 0$ and all $\nu_i \in \mathcal{M}_W$.  Since $\mu \in \mathcal{M}_W$ and $P_t \mu= \mu$, the result now follows.
\end{proof}

\section*{Appendix}

We have left to prove that $U$ defined in Example~\ref{ex:singularH} is an admissible potential and to establish Corollary~\ref{cor:1} and Corollary~\ref{cor:1}. We begin with:

\begin{lemma}
\label{lem:singularH}
Consider the potential $U$ and the open set $\mathcal{O}$ defined in Example~\ref{ex:singularH}.  Then there exist constants $c_i, D>0$ such that
\begin{align}
\label{eqn:lowergrad}
| \nabla U(q)| \geq c_1 |q|^{\alpha -1} + c_2\sum_{i<j} |q_i-q_j|^{-\beta-1} -D
\end{align}
for all $q\in \mathcal{O}$.  Consequently, $U$ is an admissible potential.
\end{lemma}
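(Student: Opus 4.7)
The plan is to check each clause of Definition~\ref{def:admissible} in turn. Smoothness on $\mathcal O$, openness and path connectedness of $\mathcal O$, precompactness of sublevel sets $\{U<R\}$, and integrability of $e^{-U/T}$ were already established in the body of Example~\ref{ex:singularH}, so the substantive content of the lemma is the gradient lower bound~\eqref{eqn:lowergrad}. From~\eqref{eqn:lowergrad} the two asymptotic clauses follow routinely: if $U(q_k)\to\infty$ then either $|q_k|\to\infty$ or $\min_{i<j}|q_{k,i}-q_{k,j}|\to 0$, both of which force $|\nabla U(q_k)|\to\infty$ by~\eqref{eqn:lowergrad}; and the matching elementary upper bound $|\nabla^2 U(q)|\leq C(1+|q|^{\alpha-2}+\sum_{i<j}|q_i-q_j|^{-\beta-2})$, combined with~\eqref{eqn:lowergrad} squared, gives $|\nabla^2 U|/|\nabla U|^2\to 0$.

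To prove~\eqref{eqn:lowergrad} I would first translate the asymptotic hypotheses on $\phi_0,\phi_I$ into quantitative one-body estimates: for every $\varepsilon\in(0,1)$ there exist $R_\varepsilon,\delta_\varepsilon>0$ with $|\nabla U_0(x)|\geq(1-\varepsilon)A\alpha|x|^{\alpha-1}$ for $|x|\geq R_\varepsilon$ and $|\nabla U_I(y)|\geq(1-\varepsilon)B\beta|y|^{-\beta-1}$ for $0<|y|\leq\delta_\varepsilon$, with both gradients uniformly bounded on the complementary compact regions. Fix small $\delta_0$ and large $R_0$. On the precompact tame set $\{|q_k|\leq R_0\text{ and }r_{ij}:=|q_i-q_j|\geq\delta_0\}$ both sides of~\eqref{eqn:lowergrad} are bounded and the inequality holds by enlarging $D$. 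Elsewhere, using $\sum_{i<j}r_{ij}^{-\beta-1}\leq\binom{N}{2}r^{-\beta-1}$ with $r:=\min_{i<j}r_{ij}$, I reduce everything to proving $|\nabla U|\geq c_1|q|^{\alpha-1}+c_2 r^{-\beta-1}-D$. The confining piece is a pigeonhole argument: some coordinate satisfies $|q_{k_*}|\geq|q|/\sqrt N$, and when no pair distance involving $k_*$ is small the interaction forces on $k_*$ are uniformly bounded, so $|\partial_{q_{k_*}}U|\gtrsim|q|^{\alpha-1}-O(1)$.

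The interaction piece $c_2 r^{-\beta-1}$ is the genuine difficulty, and the main obstacle, because the pairwise forces appearing in any single $\partial_{q_k}U$ obey Newton's third law and can partially cancel when several particles bunch tightly. My plan is a cluster analysis. Fix a large constant $K$ and group particles into clusters under the equivalence that connects two particles whose distance is $\leq Kr$. Let $S$ be a cluster containing a minimizing pair $(j_1,j_2)$, set $\hat v=(q_{j_1}-q_{j_2})/r\in\R^d$, and pick $i_*\in S$ to maximize $\hat v\cdot q_k$ over $k\in S$. Testing $\nabla U$ against the unit vector $V\in(\R^d)^N$ supported only in the $i_*$-slot with value $\hat v$ gives $|\nabla U|\geq|\nabla U\cdot V|=|\hat v\cdot\partial_{q_{i_*}}U|$. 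The key observation is that for every $j\in S$ the vector $q_{i_*}-q_j$ has non-negative $\hat v$-component by extremality of $i_*$, so writing $\nabla U_I(x)=-B\beta|x|^{-\beta-2}x+\nabla\phi_I(x)$, the leading contributions $-B\beta r_{i_*j}^{-\beta-2}\hat v\cdot(q_{i_*}-q_j)$ to $\hat v\cdot\partial_{q_{i_*}}U$ all share the same sign and therefore add constructively. The $j=j_2$ term alone contributes at least $B\beta r_{i_*j_2}^{-\beta-2}\cdot r$, and since $r_{i_*j_2}$ is bounded by the cluster diameter $\leq|S|Kr\leq NKr$, this is $\gtrsim r^{-\beta-1}$ up to a constant depending only on $N,K,\beta$. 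The extra-cluster forces on $i_*$ have per-particle magnitude $\lesssim(Kr)^{-\beta-1}$ and contribute in total at most $NK^{-\beta-1}r^{-\beta-1}$, which is negligible once $K$ is fixed large; the $\phi_I$ correction is $o(r^{-\beta-1})$ by hypothesis; and $|\hat v\cdot\nabla U_0(q_{i_*})|$ is absorbed by the confining part of the bound. Averaging the confining and interaction estimates with positive weights then yields~\eqref{eqn:lowergrad}.
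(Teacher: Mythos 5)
Your overall plan---splitting the gradient lower bound into a confining piece and a pairwise-repulsion piece and then combining them with a positive-weight average---matches the skeleton of the paper's argument, but the mechanism you propose for each piece differs from the paper's, and the interaction piece has a quantitative gap that the constants do not close.

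The interaction estimate is the problem. You test $\nabla U$ against a unit vector supported in the single slot $i_*$, where $i_*$ maximises $\hat v\cdot q_k$ over the cluster $S$ built with scale $Kr$. Extremality gives the sign-definiteness of the intra-cluster terms, which is a nice observation, but the only term you can bound from below is the $j_2$ one, and there the best you have is
\[
B\beta\,|q_{i_*}-q_{j_2}|^{-\beta-2}\,\hat v\cdot(q_{i_*}-q_{j_2})\ \geq\ B\beta\,(NKr)^{-\beta-2}\,r\ =\ B\beta\,(NK)^{-\beta-2}\,r^{-\beta-1},
\]
since $|q_{i_*}-q_{j_2}|$ can be as large as the cluster diameter and $\hat v\cdot(q_{i_*}-q_{j_2})$ is only guaranteed to be at least $r$. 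Meanwhile the extra-cluster error is bounded only by $N B\beta\,(Kr)^{-\beta-1}=NB\beta\,K^{-\beta-1}r^{-\beta-1}$. The ratio of the error bound to the main-term bound is $N^{\beta+3}K$, which is strictly greater than $1$ for every admissible $K\geq 1$ (and $K\geq 1$ is forced if $j_1,j_2$ are to lie in a common cluster). Taking $K$ large makes the extra-cluster bound small, but it makes the intra-cluster lower bound shrink even faster by an extra factor of $K$. One can realise this cancellation concretely: a chain of particles with constant $\hat v$-coordinate makes the only nonvanishing intra term the far-away $j_2$ one, and a single extra-cluster particle placed along $\hat v$ can cancel it exactly. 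So testing on the single slot $i_*$ does not yield~\eqref{eqn:lowergrad}. There is also a smaller gap in the confining piece: your pigeonhole argument handles only the case in which $k_*$ has no close partner, but the confining lower bound $|\nabla U|\geq c_1|q|^{\alpha-1}-D_1$ needs to hold even when $k_*$ is part of a tight pair, where the repulsive force on $k_*$ is large and of uncontrolled direction.

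The paper avoids both issues by choosing multi-slot test directions. For the confining bound it uses a cluster at the fixed absolute scale $N^{-1}$ and tests against a direction that is the \emph{same} unit vector $q_1/|q_1|$ on every slot in the cluster and zero outside; the intra-cluster interaction terms then cancel exactly (by Newton's third law applied pair by pair), rather than merely being sign-definite, while the cross terms are bounded since all cross distances are at least $N^{-1}$. For the interaction bound it uses no cluster at all: with $\sigma=(q_2-q_1)/|q_2-q_1|$ it tests against $\xi_k=c_k\sigma$ with signs $c_k\in\{\pm1\}$ chosen by thresholding $q_k\cdot\sigma$ at $q_2\cdot\sigma$, so that \emph{every} pair contributes with the favourable sign, including the $(1,2)$ pair which gives $B\beta|q_1-q_2|^{-\beta-1}$ directly. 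That is the key device missing from your proposal; if you replace the single-slot test vector by a sign-flipped multi-slot direction of this type, the constant-balancing problem disappears.
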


The argument is a slight reworking of the proof of Lemma~4.12 of~\cite{CG_10}.  
\begin{proof}
From the bound~\eqref{eqn:lowergrad}, the asymptotic properties in the third part of the definition of an admissible potential are immediate, which was all that left to show that $U$ is admissible.  To establish the claimed bound, by the asymptotic properties of $\phi_0$ and $\phi_I$, note that it suffices to assume that $\phi_0=\phi_I=0$.  The basic idea behind the rest of the proof is to pick the right directional derivative.  Notationally, set $\mathcal{Z}_N= \{ 1, 2, \ldots, N\}$.

We first claim that there exists constants $d_i, D_i >0$ such that
\begin{align}
\label{eqn:bound1}
|\nabla U(q)| \geq  d_i |q_i|^{\alpha-1} - D_i
\end{align}
for all $i=1,2, \ldots, N$ and $q\in \mathcal{O}$.  From this, it follows immediately that
\begin{align}
\label{eqn:bound2}
|\nabla U (q)| \geq 2c_1 |q|^{\alpha-1} -C_1
\end{align}
on $\mathcal{O}$ for some constants $c_1, C_1 >0$.  Note that without loss of generality it suffices to show the bound~\eqref{eqn:bound1} above for $i=1$ and for $|q_1| > 1$.  For $q=(q_i)_{i=1}^N\in \mathcal{O}$, consider an increasing sequence of sets $S_i(q)$, $i=1,2, \ldots, N$, defined inductively as follows:
\begin{align*}
S_1(q) &= \{ j \in \mathcal{Z}_N\, : \, |q_1- q_j| < N^{-1}\} \\
S_m(q) &= \{ j \in \mathcal{Z}_N \, : \, |q_j - q_k|< N^{-1} \,\, \exists \,\,k \in S_{m-1}(q)\}, \,\,\,\, m =2, \ldots, N.
\end{align*}
Let $\mathcal{S}(q)= S_N(q)$ and observe that for any $i,j \in \mathcal{S}(q)$, $|q_i - q_j| < 1$.  Consequently, combining $|q_1-q_j|^2=|q_1|^2+ |q_j|^2 - 2 q_1\cdot q_j$ with the inequality $|q_1-q_j|<1$, it follows that for $j\in \mathcal{S}$
\begin{align*}
q_1 \cdot q_j \geq \frac{|q_1|^2 + |q_j|^2 - 1}{2} \geq 0
\end{align*}
where the last inequality follows since $|q_1|>1$.  
Moreover, if $i\in \mathcal{S}(q)$ while $j\notin \mathcal{S}(q)$ we have that $|q_i - q_j | \geq N^{-1}$.  Let $\sigma(q)=(\sigma_1(q), \ldots, \sigma_N (q)) \in (\R^d)^N$ be such that $\sigma_i(q)= q_1/|q_1|$ if $i\in \mathcal{S}(q)$ and $\sigma_i=0$ otherwise.  We thus have the bound
\begin{align*}
&\sqrt{N} |\nabla U(q)| \\
&\geq \sigma(q) \cdot \nabla U(q)\\
&= \sum_{k \in \mathcal{S}} A \alpha \frac{q_1\cdot q_k}{|q_1|}|q_k|^{\alpha-2}  + \sum_{k\in \mathcal{S}} \sum_{\substack{j=1\\j\neq k}}^N B\beta \frac{q_1 |q_1|^{-1}\cdot (q_j - q_k)}{|q_j-q_k|^{\beta+2}}\\
&\geq A\alpha |q_1|^{\alpha-1} + \sum_{k\in \mathcal{S}} \sum_{\substack{j=1\\j\neq k\\j \in \mathcal{S}}}^N B\beta \frac{q_1 |q_1|^{-1}\cdot (q_j - q_k)}{|q_j-q_k|^{\beta+2}} + \sum_{k\in \mathcal{S}} \sum_{\substack{j=1\\j\neq k\\j \notin \mathcal{S}}}^N B\beta \frac{q_1 |q_1|^{-1}\cdot (q_j - q_k)}{|q_j-q_k|^{\beta+2}} \\
&\geq  A\alpha |q_1|^{\alpha-1} +0 - B\beta N^{\beta + 3}.
\end{align*}
This finishes the proof of the bound~\eqref{eqn:bound1} when $i=1$, as desired.

We next show that for all $i,j \in \mathcal{Z}_N$ with $i\neq j$, there exist constants $d_{i,j}, D_{i,j}>0$ such that
\begin{align}
\label{eqn:bound3}
|\nabla U(q)| \geq d_{i,j} |q_i-q_j|^{-\beta-1} - D_{i,j}
\end{align}
for all $q =(q_1, q_2, \ldots, q_N) \in \mathcal{O}$.  From this estimate and the bound~\eqref{eqn:bound2}, the lemma thus follows.  Without loss of generality, we will prove the bound~\eqref{eqn:bound3} for $i=1, j=2$.  For $q\in \mathcal{O}$, let $\sigma(q)= (q_2-q_1)/|q_2-q_1|$ and $\xi_k(q)= c_k(q) \sigma(q)$ where the constants $c_k(q) \in \{-1,1\}$ are chosen as follows: $c_k(q) = 1$ if $q_k\cdot \sigma < q_2 \cdot \sigma$ and $c_k(q)=-1$ otherwise.  With this choice of direction $\xi(q):=(\xi_1(q), \ldots, \xi_N(q))$ we find that on $\mathcal{O}$:
\begin{align}
\label{eqn:bound4}
\nonumber \sqrt{N} |\nabla U(q)|& \geq \xi(q) \cdot \nabla U(q)\\
\nonumber & = \sum_{k =1}^N A\alpha \xi_k(q) \cdot q_k |q_k|^{\alpha-2}  + \sum_{j <k}  B \beta (\xi_j(q)-\xi_k(q)) \cdot \frac{q_k-q_j}{|q_k-q_j|^{\beta+2}}\\
&\geq -  D_2' |q|^{\alpha-1} + B\beta |q_1 -q_2|^{-\beta-1}
\end{align}
for some constant $D_2'>0$.  Combining the bound~\eqref{eqn:bound2} with~\eqref{eqn:bound4} we arrive at the desired estimate.
\end{proof}

We next finish proving Corollary~\ref{cor:1} and Corollary~\ref{cor:2}.

\begin{proof}[Proof of Corollary~\ref{cor:1}]
Fix $b\in (0, \frac1{T})$ and let $W$ and $c,K>0$ be, respectively, the corresponding Lyapunov function and constants given in Proposition~\ref{prop:main}.  Let $\tau_n= \inf\{ t>0 \, : \, W(x(t)) \geq n \}$ and $\tau_n(t)= \tau_n \wedge t$.  Define 
\begin{align*}
V(t, x) = e^{c t} (W(x)-K/c).  
\end{align*}
It follows by Ito's formula that
\begin{align*}
\E_x V (\tau_n(t), x(\tau_n(t))) \leq W(x),
\end{align*}
implying
\begin{align*}
\E_x e^{c \tau_n(t)} W (x(\tau_n(t))) \leq W(x) + e^{c t} K/c.
\end{align*}
Since $W\geq 0$, we may estimate the lefthand side of the above as
\begin{align*}
\E_x e^{c \tau_n(t)} W (x(\tau_n(t)))\geq e^{c t} \E_x \textbf{1}_{\{ t < \tau_n\}} W(x(t)).
\end{align*}
Combining the previous two estimates and applying monotone convergence as $n\rightarrow \infty$ gives the desired bound since $\PP_x\{ \lim_{n\rightarrow \infty} \tau_n > t\}=1$ by Proposition~\ref{prop:exist}.
\end{proof}

\begin{remark}
  The proof of Proposition~\ref{prop:exist} does not require us to first
  find a Lyapunov function $W$ of the type in Proposition~\ref{prop:main} or the strong exponential  moment bound in Corollary~\ref{cor:1}. A simple stoping time argument using $H$ as a test function instead of $W$ can produce $\E H(x_t) \leq H(x_0) + C_0 t$ and 
  \begin{align*}
    \E \sup_{s\leq t} H(x_s)\leq H(x_0) + C_1 t
  \end{align*}
for postitive constants $C_0$ and $C_1$. This is to sufficient derive
global existence. 
\end{remark}

\begin{proof}[Proof of Corollary~\ref{cor:2}] Here we largely follow
  the path laid out in \cite{MSH_02}.
First note that for each $R>0$, $\mathcal{C}_R$ is compact by definition of an admissible potential and Proposition~\ref{prop:main}.  Also, since $W(x)\rightarrow \infty$ as $H(x)\rightarrow \infty$, $x\in \mathcal{X}$, we may pick $R_0>0$ such that for $R\geq R_0$, $\text{interior}(\mathcal{C}_R) \neq \emptyset$.  Thus fix $R\geq R_0$ and $t_0>0$.  By Proposition~\ref{prop:orbit}, we have that
\begin{align*}
P_t(x, B_\delta(y)) >0
\end{align*}
for all $x,y \in \mathcal{X}$ and all $\delta >0$, where $B_\delta(y)$ denotes the open ball of radius $\delta>0$ centered at $y$.  Thus by Proposition~\ref{prop:orbit} again, pick $(y^*, z^*) \in \text{interior}(\mathcal{C}_R)$ such that
\begin{align*}
r_{t_0/2}(y^*, z^*)=2\epsilon >0.
\end{align*}
Also, by continuity of the density $r_t(x,y)$ we may pick $\delta >0$ such that $B_\delta(y^*) \times B_\delta(z^*) \subseteq \text{interior}(\mathcal{C}_R) \times \text{interior}(\mathcal{C}_R)$ and such that for all $(y, z) \in B_\delta(y^*)\times B_\delta(z^*)$\begin{align*}
r_{t_0/2}(y, z)\geq \epsilon.
\end{align*}
For $A\subseteq \mathcal{X}$ Borel, define
\begin{align*}
\nu(A)= \frac{1}{|B_\delta(z^*)|} |A \cap B_\delta(z^*)|
\end{align*}
where in the above $| \,\cdot\, |$ denotes Lebesgue measure on Borel subsets of $(\R^d)^N \times (\R^d)^N$.  Then observe that for all $x\in \mathcal{C}_R$ and $A\subseteq \mathcal{X}$ Borel
\begin{align*}
P_{t_0}(x, A) &\geq \int_{A\cap B_\delta(z^*)} \int_{B_\delta(y^*)} r_{t_0/2}(x,y) r_{t_0/2}(y,z) \, dy \, dz \\
&\geq \epsilon |B_\delta(z^*)|  \inf_{x\in \mathcal{C}_R} P_{t_0/2}(x, B_\delta(y^*))\nu(A)
\end{align*}
Since $P_{t_0/2}(x, B_\delta(y^*)) >0$ for all $x\in \mathcal{C}_R$ and $x\mapsto P_{t_0/2}(x, B_\delta(y^*))$ is continuous by Proposition~\ref{prop:orbit}, the result now follows since $ \inf_{x\in \mathcal{C}_R} P_{t_0/2}(x, B_\delta(y^*))>0$.  \end{proof}

\section*{Acknowledgements}
We are grateful to Scott A. McKinley and Gabriel Stoltz for fruitful
conversations about the topic of this paper. In particular, Stoltz was
part of early discussions around the possible relationship between
\cite{Grothaus_Stilgenbauer_2016,MR3324910} and building a Lyapunov
function for Hamiltonian dynamics with singular potentials. D.P.H. and
J.C.M. are respectively supported in part by grant DMS-1612898 and
DMS-1613337 from the National Science Foundation.

\bibliographystyle{plain}
\bibliography{gham}
\end{document}